\numberwithin{equation}{section}
\newtheorem{lemma}{Lemma}[section]
\newtheorem{propn}[lemma]{Proposition}
\newtheorem{thm}[lemma]{Theorem}
\newtheorem{defn}[lemma]{Definition}
\newtheorem{remark0}[lemma]{Remark}
\DeclareMathOperator{\CR}{CR}
\renewenvironment{proof}{{\em Proof.}}{\hspace*{\fill} $\square$}
\newenvironment{proofof}[1]{{\em Proof of #1.}}{\hspace*{\fill} $\square$}
 \newcommand{\C}{\mathbb{C}}
 \newcommand{\N}{\mathbb{N}}
 \newcommand{\I}{\mathbbm{1}}
 \newcommand{\E}[1]{\mathbb{E}\left [ #1 \right ]}
 \newcommand{\Os}{\mathcal{O}}
 \newcommand{\F}{\mathcal F}
 \newcommand{\e}{\operatorname{e}}
 \newcommand{\D}{\mathbb D }
 \renewcommand{\I}[1]{\mathbf 1_{\{#1\}}}
  \renewcommand{\1}{\mathbf 1}
  \renewcommand{\P}{\mathbb P}
  \newcommand{\leb}{\operatorname{Leb}}
 \newcommand{\eps}{\varepsilon}
 \newcommand{\gcn}{n(C,\gamma)}
 \newcommand{\phs}{\hat{\mathbb{P}}^*}
 \newcommand{\ehs}{\hat{\mathbb{E}}^*}
\begin{document}

\title{Critical Liouville measure as a limit of subcritical measures} 
\author{Juhan Aru$^*$ and Ellen Powell$^*$}
\address{$^*$Department of Mathematics, ETH Zürich, Rämistr. 101, 8092 Zürich, Switzerland}
\author{ Avelio Sep\'{u}lveda$^\dagger$}
\address{$^\dagger$Univ Lyon, Université Claude Bernard Lyon 1, CNRS UMR 5208, Institut Camille Jordan, 69622 Villeurbanne, France}
\date{}
\begin{abstract}
	We study how the Gaussian multiplicative chaos (GMC) measures $\mu^\gamma$ corresponding to the 2D Gaussian free field change when $\gamma$ approaches the critical parameter $2$. In particular, we show that as $\gamma\to 2^{-}$, $(2-\gamma)^{-1}\mu^\gamma$ converges in probability to $2\mu'$, where $\mu'$ is the critical GMC measure.
\end{abstract}
\maketitle

\section{Introduction}
Gaussian multiplicative chaos (GMC) theory aims to give a meaning to the heuristic volume form ``$e^\Gamma d\leb$'', where $\Gamma$ is some rough Gaussian field that is not defined pointwise. Such constructions first appeared for Gaussian free fields in the early 70s \cite{HK}, where $\int_D e^\Gamma d\leb$ was defined to  add an exponential interaction to the underlying free field. The theory was then developed for a larger class of Gaussian fields, and named as Gaussian multiplicative chaos, by Kahane \cite{KAH}. 

GMC measures corresponding to the 2D continuum Gaussian free field (GFF), have recently become an active area of study, due to their links to the probabilistic description of 2D Liouville quantum gravity \cite{DS, DKRV}. Out of convention, we will call such measures the Liouville measures \footnote{In the physics literature, ``Liouville measure" refers to a volume form coming from a conformal field theory with a non-zero interaction term (see \cite[Section 3.6]{RVnotes}), inducing a certain weight on the law of the underlying GFF. Therefore, our measures  correspond to a degenerate case, where the interaction parameter is equal to $0$.}.

In this article, we study how the Liouville measures $\mu^\gamma$ vary, for a fixed underlying field, when the parameter $\gamma$ tends to the critical parameter $\gamma = 2$ from below. It is known that the measures $\mu^\gamma$ change analytically in $\gamma$ throughout the subcritical regime $0 \leq \gamma < 2$ (it follows, for example, from a more general result, Theorem 4 of \cite{JJ} \footnote{Indeed, when one considers the approximation of the GFF on $[0,1]^2$ by DGFF-s, then the conditions (4), (7) and the condition on exponential moments can be checked directly; the condition (5) follows from the estimates on the discrete Green's function due to Kenyon, as given for example in Theorem 2.5 of \cite{ChSm}}), and also not hard to show that $\mu^\gamma \to 0$ as $\gamma \to 2$. In Conjecture 9 of \cite{DSRV}, the authors conjecture that $(2-\gamma)^{-1}\mu^\gamma$ converges to a multiple of the so-called critical Liouville measure $\mu'$. The main result of this paper is the confirmation of this conjecture, and determination that the constant is equal to two (see Remark \ref{rem::factor2} for a discussion). More precisely, we prove that:

\begin{thm}\label{thm::conv_derivative}  Let $\Gamma$ be a zero boundary Gaussian free field in a domain $D\subset \C$ and let $\mu^\gamma$ for $\gamma<2$ be the associated sequence of Liouville measures (defined in Theorem \ref{thm::subcritical_mollified}) together with $\mu'$ the critical Liouville measure (defined in Theorem \ref{thm::critical_def}). Then as $\gamma \to 2^{-}$ we have
	\[\frac{\mu^\gamma}{2-\gamma}\to 2\mu'\]
	in probability, with respect to weak convergence of measures. 
\end{thm}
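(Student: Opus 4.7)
My plan is to use the standard circle-average regularization of the GFF together with a Girsanov-based rooted (Peyri\`ere) measure approach and a good-event truncation tracking the multi-scale behaviour of the field near each point. The underlying heuristic is clean: treating the mollified measures $\mu^\gamma_\eps(dz) = \eps^{\gamma^2/2} e^{\gamma \Gamma_\eps(z)}\,dz$ as analytic in $\gamma$, a formal Taylor expansion around $\gamma=2$ gives, since $\mu^2_\eps \to 0$,
\[
\frac{\mu^\gamma_\eps}{2-\gamma}\;\approx\;-\partial_\gamma \mu^\gamma_\eps\big|_{\gamma=2}\;=\;\bigl(2\log(1/\eps)-\Gamma_\eps(z)\bigr)\,\mu^2_\eps(dz),
\]
which is exactly the derivative-martingale approximation of $\mu'$. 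The constant $2$ in the target limit is essentially a choice of normalization in Theorem~\ref{thm::critical_def}, and the substantive content of the theorem is to justify commuting the limits $\eps\to 0$ and $\gamma\to 2^-$.

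Concretely, for a fixed mesoscopic scale $\eta>0$ and a large $n$, I would introduce truncated measures $\mu^{\gamma,\eta}_n$ and $\mu^{\prime,\eta}_n$ obtained by restricting the mollified measures to a good event $E_\eta(n,z)$ that encodes typical behaviour of the circle averages $\Gamma_{e^{-k}}(z)$ across all scales $\eta < e^{-k} \leq e^{-n}$. Under the rooted measure $\hat{\mathbb Q}^\gamma_z$ obtained by biasing by the density of $\mu^\gamma_n(dz)$, the process $k\mapsto \Gamma_{e^{-k}}(z)-\gamma k$ is a standard Brownian motion, so equivalently the ``height'' $X_k := 2k-\Gamma_{e^{-k}}(z)$ is a Brownian motion with drift $\delta = 2-\gamma$. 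Choosing $E_\eta$ to require this height to stay above a threshold matches the Bessel-$3$ description used for the derivative martingale defining $\mu'$ in the limit $\delta\to 0$. Within this common framework, I would compute the first and second moments of $(2-\gamma)^{-1}\mu^{\gamma,\eta}_n(\phi)$ together with the mixed moment against $\mu^{\prime,\eta}_n(\phi)$ using the rooted representation; these reduce to explicit Gaussian and Bessel-$3$ computations, and yield $L^2$ convergence $(2-\gamma)^{-1}\mu^{\gamma,\eta}_n(\phi)\to 2\mu^{\prime,\eta}(\phi)$ (first as $n\to\infty$, then as $\gamma\to 2^-$) for any bounded continuous $\phi$.

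The main obstacle I expect is uniform-in-$\gamma$ control of the complement of the good event, i.e.\ showing that the ``bad'' contribution to $(2-\gamma)^{-1}\mu^\gamma$ vanishes uniformly as $\gamma\to 2^-$; via the rooted measure this reduces to sharp corridor-exit estimates for Brownian motion with small drift $\delta = 2-\gamma$, and it is precisely here that the renewal-type constant producing the factor $2$ enters, through the entrance law of the limiting three-dimensional Bessel process. The truncation scale $n = n(C,\gamma)$ should be chosen of order $(2-\gamma)^{-2}$ so that $\delta\sqrt{n}$ remains of order one, balancing the drift against typical fluctuations. Once these bounds are in hand, taking $\eta\to 0$ by a standard tightness/monotonicity argument and then using a separability argument over test functions upgrades the pointwise convergence in probability to convergence in probability with respect to the weak topology on measures, completing the proof.
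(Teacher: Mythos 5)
Your high-level plan --- rooted measure, good-event truncation at scale $n\sim(2-\gamma)^{-2}$, then releasing the truncation --- has the right shape and matches the paper's choice $n(C,\gamma)=\lfloor(C/(2-\gamma))^2\rfloor$. But there is a genuine conceptual gap near the start. You claim the constant $2$ is ``essentially a choice of normalization in Theorem~\ref{thm::critical_def}'' and that ``the substantive content of the theorem is to justify commuting the limits $\eps\to 0$ and $\gamma\to 2^-$.'' Both are wrong. The normalization of $\mu'$ is already fixed as the limit of $D_\eps$, and if the two limits commuted the answer would be $\mu'$, not $2\mu'$; the factor $2$ exists precisely \emph{because} they do not commute (Remark~\ref{rem::factor2}). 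Concretely, splitting $M_n^\gamma=M_n^{\gamma,+}+M_n^{\gamma,-}$ by the sign of the rooted walk, each of $(2-\gamma)^{-1}M^{\gamma,\pm}$ converges to $\mu'$ as $\gamma\to2^-$, whereas $D_n^-\to0$ as $n\to\infty$; the heuristic ``$(2-\gamma)^{-1}\mu^\gamma\approx D$'' silently discards the $M^{\gamma,-}$ contribution. Equivalently, in the critical window $\log(1/\eps)\sim(2-\gamma)^{-2}$ every order of the $\gamma$-Taylor expansion of $\mu_\eps^\gamma$ at $\gamma=2$ contributes at order one, so a first-order expansion cannot see the answer. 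Your later remark about the Bessel/meander entrance-law constant is indeed the correct mechanism, but it is inconsistent with the normalization claim you open with; the proof must actually compute this constant rather than absorb it.

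On the technical side, two further gaps. You propose an $L^2$ computation, but the second moment of $\mu^\gamma$ is already infinite for $\gamma\ge\sqrt2$, so all $L^2$ bounds must be routed through truncations and made uniform as $\gamma\to2^-$ --- exactly the delicate part. The paper instead uses $p$-th moments with $p=1+(2-\gamma)/2\downarrow 1$ together with the von Bahr--Esseen inequality (Lemmas~\ref{lem::pmoment}, \ref{lem::centre_z_pmoment} and the proof of Lemma~\ref{lem::main2}), and this leans essentially on the exact conditional independence of the chaos across components of $D\setminus A_n$ given the first-passage set $A_n$. Your circle-average framework does not enjoy such decoupling --- the circle-average processes at distinct points are long-range correlated with no conditional independence structure --- so the Lemma~\ref{lem::main2}-type error bound would require substantially more machinery. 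Finally, the engine behind the paper's Lemma~\ref{lem::main1} is the extended Seneta--Heyde local limit theorem (Theorem 6.7 of \cite{APS}), which gives convergence of $\sqrt{n}D_n(\D)^{-1}\int_\D e^{2n-2l(z,n)}F(S_n(z)/\sqrt n)\,dz$ to a Brownian-meander expectation; your outline does not identify an analogous input, and first/second/mixed moment computations alone would not replace it.
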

The analogous result, in the setting of multiplicative cascades/branching random walk,  was already known \cite{Madaule}. Our proof strategy is to use the construction of Liouville measure as the multiplicative cascade introduced in \cite{APS} and to then transfer the proof from the case of cascades over to the case of the Liouville measure. Whereas we roughly follow the proof in \cite{Madaule}, in some places there are additional technicalities, and in others there are simplifications. Moreover, we strongly use the results on the Seneta-Heyde scaling of the Liouville measure proved in \cite{APS}.

Our results are a very first step (see Section \ref{sec::extensions}) towards taking $\gamma \to 2^{-}$ limits in the peano-sphere approach to 2D Liouville quantum gravity \cite{DMS}, and also allow one to extend the Fyodorov-Bouchaud formula \cite{Gui} to the critical case.

The rest of the article is structured as follows: we start with basic definitions followed by a  few preliminary lemmas; in Section \ref{sec::proof} we prove the main result; and finally, we discuss some extensions.

\section{Basic definitions} 

Since this article is intended to be a rather brief follow-up to \cite{APS}, we keep the preliminaries to a minimum. We refer the reader to \cite{APS} for more detailed background on the planar Gaussian free field, its local sets and associated chaos measures. 

\subsection{The Gaussian free field and first passage sets}

We denote by $\Gamma$ a Gaussian free field with zero boundary conditions in a simply connected domain $D\subset \C$. 
That is, $\Gamma$ is a centered Gaussian process indexed by the set of smooth functions in $D$, with covariance given by 
\begin{equation}\label{GFF}
\E {(\Gamma,f) (\Gamma,g)}  =  \iint_{D\times D} f(x) G_D(x,y) g(y) d x d y.
\end{equation} 
Here $G_D$ is the Dirichlet Green's function  in $D$. We normalise it so that as $x \to y$,   $G_D(x,y)\sim \log(1/|x-y|)$.

One important characteristic of the Gaussian free field is that it satisfies a spatial Markov property. In fact, it also satisfies a strong spatial Markov property at certain stopping, or ``local'' sets, first studied in \cite{SchSh2}:

\begin{defn}[Local sets]
	Consider a random triple $(\Gamma, A,\Gamma_A)$, where $\Gamma$ is a  GFF in $D$, $A$ is a random closed subset of $\overline D$  and $\Gamma_A$ a random distribution that can be viewed as a harmonic function, $h_A$, when restricted to $D \setminus A$.
	We say that $A$ is a local set for $\Gamma$ if conditionally on $A$ and $\Gamma_A$, $\Gamma^A:=\Gamma - \Gamma_A$ is a (zero-boundary) GFF in $D \setminus A$.
\end{defn}

One particularly nice class of local sets are those corresponding to the first hitting time of level $a\geq 0$ of a Brownian motion. They are called first passage sets (FPS), were introduced in \cite{ALS}, and are characterised by the following proposition:
\begin{propn}[First passage sets]
	Let $a\geq 0$ and $\Gamma$ be a GFF in $D$. Then, there exists a unique local set $A_{a}$ of $\Gamma$ such that:
	\begin{enumerate}[i)]
		\item $h_{A_{a}}=a$
		\item $a-\Gamma_{A_{a}}$ is a positive measure.
	\end{enumerate}
	$A_{a}$ is called the ``first passage set'' of level $a$ of $\Gamma$.
\end{propn}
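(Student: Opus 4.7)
The plan is to prove existence and uniqueness separately. For existence, the natural approach is to realise $A_a$ as a monotone limit of ``two-valued local sets'' of the GFF, whose harmonic function on the complement takes only the two values $-b$ and $a$. Assuming that such sets $\mathbb{A}_{-b,a}$ exist for every small $b>0$ (as established elsewhere in the local-set literature), one can let $b_n \downarrow 0$ and set $A_a$ to be the closure of the increasing union $\bigcup_n \mathbb{A}_{-b_n,a}$. Three things then need to be verified: (1) that $A_a$ is itself a local set, which follows from the fact that a monotone limit of local sets (with appropriate measurability) remains local; (2) that $h_{A_a} \equiv a$, since the ``$-b_n$'' region shrinks to a negligible set in the limit; and (3) that $a - \Gamma_{A_a}$ is a positive measure, which should inherit from the corresponding positivity of each $a - \Gamma_{\mathbb{A}_{-b_n,a}}$ (a non-negative boundary contribution on the $a$-component, plus the value $a+b_n > 0$ on the $-b_n$-component).

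For uniqueness, suppose that $A$ and $A'$ are two local sets of $\Gamma$ both satisfying (i) and (ii). The strategy is to form the local-set union $A \cup A'$ by first sampling $A$ and then sampling a local set with the same characterising properties as $A'$ with respect to the residual field $\Gamma^A = \Gamma - \Gamma_A$. Standard results on unions of local sets imply that $A \cup A'$ is again a local set, with harmonic function $h_{A \cup A'}$ and associated distribution $\Gamma_{A \cup A'}$. The positivity hypotheses on $a - \Gamma_A$ and $a - \Gamma_{A'}$ force $h_{A \cup A'} \equiv a$ and show that $a - \Gamma_{A \cup A'}$ is bounded above (as a positive measure) by each of $a - \Gamma_A$ and $a - \Gamma_{A'}$. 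Balancing these inequalities, any extra mass carried by $a - \Gamma_{A \cup A'}$ over $a - \Gamma_A$ would contradict positivity, which forces $A = A' = A \cup A'$ almost surely.

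The main obstacle is passing the ``positive measure'' condition through the limit in the existence step. While it is relatively straightforward to argue that $h_{A_a} = a$, showing that the limiting object $a - \Gamma_{A_a}$ is a genuine positive Radon measure rather than a more singular distribution requires uniform control on the total mass of $a - \Gamma_{\mathbb{A}_{-b_n,a}}$ as $b_n \to 0$. In practice this would be obtained from explicit formulas for the boundary hitting probabilities of a Brownian motion between the levels $-b_n$ and $a$, together with a tightness argument for the associated sequence of positive measures. Getting this analytic control cleanly, and matching it against the local-set framework, is the core difficulty.
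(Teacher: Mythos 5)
The paper does not give its own proof of this proposition; it is quoted as a cited result from \cite{ALS} (``\ldots were introduced in \cite{ALS}, and are characterised by the following proposition''). So the comparison can only be against the construction in that reference, whose overall strategy you have indeed reproduced: build $A_a$ as a monotone limit of two-valued local sets $\mathbb{A}_{-b,a}$, and prove uniqueness via unions of local sets and the positive-measure condition. However, there is a concrete error in the existence step: you take $b_n \downarrow 0$, whereas the correct limit is $b_n \to \infty$. The two-valued local set $\mathbb{A}_{-b,a}$ only exists when $a + b \ge 2\lambda$ (the CLE$_4$ height-gap constant), so for small $b$ it need not be defined at all; moreover the family is monotone \emph{increasing} in $b$, and it is precisely as $b \to \infty$ that the probability of any complementary component carrying the value $-b$ tends to zero, which is what yields $h_{A_a}\equiv a$. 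Your own heuristic (``the $-b_n$ region shrinks to a negligible set'') describes the $b\to\infty$ regime, so the stated $b_n\downarrow 0$ appears to be a slip of parametrisation, but as written the construction does not work — sending $b_n\to 0$ would, if anything, make the $-b_n$ components proliferate, not vanish.

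On uniqueness, forming the local-set union is the right move, but the ``balancing inequalities'' step is not how the argument closes. After reducing to two nested candidates $A\subseteq B$, one observes that on each component $\mathcal O$ of $D\setminus A$ the trace $B\cap\mathcal O$ is a local set of $\Gamma^A|_{\mathcal O}$ whose harmonic function is $0$ and for which $-\Gamma^A_{B\cap\mathcal O}$ is a non-negative measure (this is exactly what the positivity hypothesis for $B$ gives once you subtract off $\Gamma_A$). The decisive input is then that $\mathbb{E}\bigl[\Gamma^A_{B\cap\mathcal O}\bigr]=0$: a non-negative random measure with zero mean vanishes almost surely, forcing $\Gamma_B=\Gamma_A$ and hence $B=A$. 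It is this mean-zero property of local-set distributions, not positivity alone, that rules out ``extra mass''; your sketch inverts the direction of the domination and leaves out the expectation step, which is where the contradiction actually comes from.
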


We will need the following simple properties of the FPS, see e.g. \cite{APS} for explanations and \cite{ALS,ALS2} for further properties.
\begin{propn} Let $a,\delta\geq 0$ and $\Gamma$ be a GFF, then
	\begin{enumerate}
		\item $A_{a}$ has 0 Lebesgue measure.
		\item $A_{a+\delta}$ can be explored by first exploring $A_{a}$, and then inside every connected component $O$ of $D\backslash A_{a}$, exploring $A_{\delta}$ of $\Gamma^{A_{a}}$ restricted to $O$. In particular $A_a\subseteq A_{a+\delta}$.
	\end{enumerate}
\end{propn}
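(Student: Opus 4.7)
The plan is to prove the two items separately, in each case leveraging the uniqueness characterization of the FPS as the only local set $A$ with $h_A = a$ and $a - \Gamma_A \geq 0$ in the sense of distributions.

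For item (1), I would start from the observation that, since $\Gamma_{A_a}$ agrees with the harmonic function $a$ on $D\setminus A_a$, the positive measure $a - \Gamma_{A_a}$ is in fact supported on $A_a$. The cleanest route to zero Lebesgue measure is to invoke the metric-graph construction of $A_a$ from \cite{ALS}: there, $A_a$ arises as a monotone a.s. limit of discrete level-set analogues of the metric-graph GFF, each of which already has zero Lebesgue measure, and passing to the limit yields the claim. Alternatively, working directly from the characterization, one can use the Markov decomposition $\Gamma = \Gamma_{A_a}+\Gamma^{A_a}$ and pair $\Gamma$ against a smooth bump $\phi$ supported near a putative Lebesgue density point of $A_a$: the pairing $(\Gamma_{A_a},\phi)$ is a bounded random variable (differing from the deterministic $a\int\phi$ by the locally finite measure $-(a - \Gamma_{A_a})$ paired with $\phi$), whereas $(\Gamma,\phi)$ has log-singular variance, yielding a contradiction as one shrinks $\phi$.

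For item (2), let $A'$ be the set obtained by first exploring $A_a$, then inside each component $O$ of $D\setminus A_a$ exploring the FPS $A^O_\delta$ of the independent GFF $\Gamma^{A_a}|_O$. Two applications of the strong spatial Markov property for local sets show that $A'$ is a local set of $\Gamma$ and that $\Gamma - \Gamma_{A'}$ is a zero-boundary GFF in $D\setminus A'$. On any component of $D\setminus A'$, the harmonic function of $\Gamma_{A'}$ equals $a + \delta$ (the $a$ from the first exploration, the $\delta$ from the second), and $(a+\delta) - \Gamma_{A'}$ decomposes on each component as the sum $(a - \Gamma_{A_a}) + (\delta - \Gamma^{A_a}_{A^O_\delta})$ of two positive measures, hence is itself a positive measure. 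By the uniqueness in the FPS characterization, $A' = A_{a+\delta}$, and the inclusion $A_a \subseteq A_{a+\delta}$ is immediate from the construction.

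The real obstacle is item (1): a fully self-contained Markov-decomposition argument requires care in the choice of bump function and in handling the geometry near $\partial A_a$, while the metric-graph approach circumvents this by reducing the statement to a discrete fact about level sets on a finite graph. Item (2) is essentially a uniqueness argument, with the only nontrivial input being the strong spatial Markov property for local sets (applied twice, once for $A_a$ and once for the union of the $A^O_\delta$).
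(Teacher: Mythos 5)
The paper itself does not prove this proposition; it simply cites \cite{APS,ALS,ALS2}, so the comparison is really with the standard arguments from those references. Your item (2) argument is essentially the same uniqueness argument that appears there: form the iterated exploration $A' = A_a \cup \bigcup_O A^O_\delta$, verify the two defining properties of the FPS at level $a+\delta$, and invoke uniqueness. This is correct and well executed. One small point of care: you should take a closure so that $A'$ is genuinely a closed set, and the decomposition of $(a+\delta) - \Gamma_{A'}$ into two positive pieces is a global (not per-component) identity, but neither issue affects the conclusion. It is worth noting that the positivity of $\delta - \Phi$ (your second summand, where $\Phi$ denotes the contribution from the inner explorations) does not secretly rely on item (1), since $\delta\cdot\mathbf{1}_{A_a}\,d\mathrm{Leb}$ is already a positive measure whatever the Lebesgue measure of $A_a$ is.

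For item (1), your first suggestion — inherit zero Lebesgue measure from the metric-graph construction of $A_a$ in \cite{ALS} — is the route actually taken in the literature and is the right one. Your alternative direct argument, however, has a real gap, and you are right to flag it. The claim that $(\Gamma_{A_a},\phi)$ is a bounded random variable is unjustified: all you know a priori is that $a - \Gamma_{A_a}$ is a (random) positive Radon measure, so $(\Gamma_{A_a},\phi_\epsilon) = a\int\phi_\epsilon - (a-\Gamma_{A_a})(\phi_\epsilon)$ is a.s. finite but with no uniform-in-$\epsilon$ control on its variance. Likewise, the Markov decomposition $\mathrm{Var}(\Gamma,\phi_\epsilon) = \mathrm{Var}(\Gamma_{A_a},\phi_\epsilon) + \mathbb{E}\bigl[\mathrm{Var}(\Gamma^{A_a},\phi_\epsilon \mid A_a)\bigr]$ does not immediately give a contradiction at a density point of $A_a$: it is not obvious that the second term fails to account for the full $\log(1/\epsilon)$ without quantitative geometric input near $\partial A_a$. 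So as written that route does not close; the metric-graph reduction (or, alternatively, a first-moment computation of $\mathbb{E}[\mathrm{Leb}(A_a)]=\int_D \mathbb{P}(z\in A_a)\,dz$ using the hitting-time description of $-\log\CR(z,D\setminus A_a)$) is the way to go.
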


\subsection{Construction of the Liouville measures}%Chaos measures for the Gaussian free field}

We will now briefly define the ``Liouville'' measures: that is, the family of multiplicative chaos measures corresponding to the 2D Gaussian free field. 

\subsubsection{Subcritical regime} 
When $\gamma<2$, the construction and properties of these measures %chaos measures associated with the planar Gaussian free field, also sometimes known as ``Liouville measures'', 
are now rather well understood (see, for example, reviews \cite{Ber,RVnotes,Anotes}). The standard construction goes as follows. Let $\rho:\C \to [0,\infty)$ be a smooth function of unit mass, supported on the unit disc, and for $z\in D$ and $\eps>0$, set $\rho_z^\eps(w):=\eps^{-2}\rho(\eps^{-1}(w-z))$, so that $\Gamma_\eps(z):=(\Gamma, \rho_z^\eps)$ is an approximation to $\Gamma$. Define the corresponding sequence of approximate measures
\begin{equation}\label{eqn::moll_approx} \mu_\eps^\gamma(dz):= \e^{\gamma \Gamma_\eps(z)}\eps^{\gamma^2/2} \, dz.\end{equation}
The chaos measure $\mu^\gamma$ is then defined by taking a limit of these measures as $\eps\to 0$. 
\begin{thm}\label{thm::subcritical_mollified}{\cite{DS,RV,Ber}}
	For $\gamma < 2$ the measures $\mu^\gamma_\eps$ converge to a non-trivial measure $\mu^\gamma$ weakly in probability. Moreover, for any fixed Borel set $\Os \subseteq D$ we have that $\mu^\gamma_\eps(\Os)$ converges in $L^1$ to $\mu^\gamma(\Os)$. 
\end{thm}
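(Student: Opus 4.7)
The plan is to reduce to a convenient approximation with an exact martingale structure, prove convergence there, and then transfer the conclusion to the general mollifier $\rho$ via Kahane's convexity inequality.

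First, I would replace $\rho_z^\eps$ by the uniform probability measure on $\partial B(z,\eps)$, yielding the circle-average field $\bar\Gamma_\eps(z)$. The domain Markov property of the GFF implies that for fixed $z$ at distance at least $\eps$ from $\partial D$, the process $t \mapsto \bar\Gamma_{\e^{-t}}(z)$ is a Brownian motion up to a $z$-dependent shift coming from the regular part of the Green's function. Setting $\bar\mu^\gamma_\eps(dz) := \e^{\gamma \bar\Gamma_\eps(z) - \gamma^2/2 \cdot \E{\bar\Gamma_\eps(z)^2}}\,dz$, Fubini and It\^o's formula show that $\eps \mapsto \bar\mu^\gamma_\eps(\Os)$ is a non-negative martingale in $-\log\eps$ for any bounded Borel $\Os$, and therefore converges almost surely to a limit which I call $\mu^\gamma(\Os)$.

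The main technical step is to show $\sup_\eps \E{\bar\mu^\gamma_\eps(\Os)^p} < \infty$ for some $p \in (1, 4/\gamma^2)$; this simultaneously guarantees that the limit is non-trivial and upgrades the a.s.\ convergence to $L^1$ convergence. I would prove this via Kahane's convexity inequality, dominating the log-correlated kernel of $\bar\Gamma_\eps$ by a Gaussian field with an exact multiplicative-cascade structure. For the dominating field one partitions $\Os$ into $4^n$ dyadic squares, exploits scaling and the independence of the Gaussian at widely separated scales to express the $p$-th moment of $\bar\mu^\gamma_\eps(\Os)$ in terms of the same moment at a coarser scale, and iterates to close a contractive recursion. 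The recursion converges precisely for $p < 4/\gamma^2$, which is non-empty exactly when $\gamma < 2$. This threshold is sharp and degenerates as $\gamma \to 2$, which is why the theorem fails at criticality, and this is the principal obstacle of the argument.

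To transfer from circle averages to a general mollifier $\rho$, observe that both $\E{(\Gamma,\rze)^2}$ and $\E{\bar\Gamma_\eps(z)^2}$ admit asymptotic expansions of the form $\log(1/\eps) + f(z) + o(1)$ with continuous $f$ on compact subsets of $D$. Kahane's convexity inequality applied to the Gaussian difference identifies the $L^1$ limit of $\mu^\gamma_\eps(\Os)$ with that of $\bar\mu^\gamma_\eps(\Os)$, up to an explicit bounded deterministic density that accounts for the discrepancy between the two constants $f$. The weak convergence in probability of $\mu^\gamma_\eps$ to $\mu^\gamma$ on continuous test functions then follows by linearity from the $L^1$ convergence on a dense class of Borel sets, together with the uniform first-moment bound for tightness.
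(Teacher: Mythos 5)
The theorem in question is cited by the paper from external sources \cite{DS,RV,Ber}; the paper itself offers no proof, so there is no internal argument to compare against. Your overall strategy --- find an approximation with exact martingale structure, prove a uniform $L^p$ bound for some $p\in(1,4/\gamma^2)$ via a cascade comparison, then transfer to general mollifiers by Kahane's inequality --- is indeed the Rhodes--Vargas line of attack (and compatible with the cascade route of Theorem \ref{thm::subcritical_cascades}), and the moment computation and the $4/\gamma^2$ threshold are correct.

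However, there is a genuine gap at the very first step: the circle-average approximation does \emph{not} give an exact martingale, so ``Fubini and It\^o'' do not yield the martingale property you invoke. For a \emph{fixed} $z$, the process $t\mapsto\bar\Gamma_{\e^{-t}}(z)$ is a Brownian motion, but the candidate filtration for the measure has to be something like $\F_\eps=\sigma\bigl(\bar\Gamma_\delta(w):\delta\ge\eps,\ w\in D\bigr)$, and with respect to this filtration one does not have $\E{\bar\Gamma_\eps(z)\mid\F_{\eps'}}=\bar\Gamma_{\eps'}(z)$. Concretely, the increment $\bar\Gamma_\eps(z)-\bar\Gamma_{\eps'}(z)$ fails to be orthogonal to $\bar\Gamma_\delta(w)$ whenever $\partial B(z,\eps')$ and $\partial B(w,\delta)$ intersect without one ball containing the other: the mean-value argument that makes this covariance vanish requires full containment, and in the overlapping regime the covariance with $\bar\Gamma_\delta(w)$ genuinely depends on $\eps$. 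An integral of processes that are martingales only with respect to their individual one-point filtrations is not a martingale in any common filtration, so the a.s.\ convergence you deduce from the martingale convergence theorem is unjustified. The fix is to run the martingale half of the argument with an approximation that \emph{is} an exact martingale --- Kahane's cone/$\sigma$-positive decomposition, the white-noise (heat-kernel) decomposition of $\Gamma$, or the nested FPS approximation $M_n^\gamma$ of Theorem \ref{thm::subcritical_cascades} --- and only then transfer to circle averages and general mollifiers.

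A smaller remark: the transfer step is also lighter than Kahane's convexity inequality alone can deliver. Kahane's inequality compares expectations of convex functionals of two chaos measures; it does not by itself identify the two limits. One additionally needs a uniqueness/universality argument (e.g.\ Robert--Vargas, Shamov, or the ``elementary'' $L^1$-Cauchy estimate of \cite{Ber}) to conclude that $\mu^\gamma_\eps(\Os)$ and $\bar\mu^\gamma_\eps(\Os)$ converge to the same limit up to the explicit deterministic density you describe.
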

This measure is also unique, in the sense that the same limit is obtained if one replaces $\Gamma_\eps$ with any (nice enough) mollifier approximation to $\Gamma$.

In \cite{APS}, it was shown that $\mu^\gamma$ for $\gamma<2$ can alternatively be constructed using the local sets of $\Gamma$. In this paper, we will use the explicit construction using first passage sets: for $n\in \N$ and $\gamma\ge 0$, we define the measures
\begin{equation}\label{eqn::mn_def} M_n^\gamma(dz):= \e^{\gamma n}\CR(z,D\setminus A_n)^{\gamma^2/2} \, dz\end{equation}
where $A_n$ is the $n$-FPS of $\Gamma$, and for $z\in D$, $\CR(z,D\setminus A_n)$ is the conformal radius seen from $z$ of the connected component of $D\setminus A_n$ containing $z$. 
\begin{thm}{\cite[Proposition 4.1]{APS}} \label{thm::subcritical_cascades} For $\gamma<2$, almost surely as $n\to \infty$, $M_n^\gamma$ converges  to $\mu^\gamma$ with respect to the weak-topology of measures. Moreover, for any $\Os\subset D$, $M_n^\gamma(\Os)$ is a martingale that converges almost surely and in $\mathcal{L}^1$ to $\mu^\gamma(\Os)$. Furthermore, the law of $\mu^\gamma(\Os)$ given $A_n$ is that of 
	\begin{equation}\label{decomponsition}
	e^{\gamma n}\sum_{D'\in \mathcal{A}_n} \tilde{\mu}_{D'}^\gamma(\Os\cap D'),
	\end{equation}
	where $\mathcal A_n$ is the set of connected components of $D\backslash A_n$ and $(\tilde{\mu}_{D'})_{D'\in \mathcal A_n}$ is a sequence of (conditionally) independent Liouville measures in $(D')_{D'\in \mathcal{A}_n}$.
\end{thm}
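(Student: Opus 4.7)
The plan is to establish the cascade decomposition (\ref{decomponsition}) first, then deduce the martingale and convergence statements from it, and finally obtain the weak convergence of measures via a countable family argument.

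I first apply the Markov property at the FPS $A_n$. Since $h_{A_n}\equiv n$, on each connected component $D'\in\mathcal A_n$ of $D\setminus A_n$ the field decomposes as $\Gamma=n+\Gamma^{A_n}$, where $\Gamma^{A_n}|_{D'}$ is a zero-boundary GFF in $D'$ and these restrictions are (conditionally on $A_n$) independent across components. For a compact $K\subset D'$ and $\eps<\dist(K,A_n)$ one has $\Gamma_\eps(z)=n+(\Gamma^{A_n})_\eps(z)$ for $z\in K$, so $\mu^\gamma_\eps(K)=e^{\gamma n}\tilde\mu^\gamma_{D',\eps}(K)$, where $\tilde\mu^\gamma_{D',\eps}$ is the mollifier approximation to the Liouville measure built from $\Gamma^{A_n}|_{D'}$. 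Applying Theorem \ref{thm::subcritical_mollified} inside $D'$ then gives $\mu^\gamma(K)=e^{\gamma n}\tilde\mu^\gamma_{D'}(K)$, and exhausting $D'$ by compacta yields the identity on all of $D\setminus A_n$. To upgrade this to (\ref{decomponsition}) I need $\mu^\gamma(A_n)=0$; since $A_n$ has zero Lebesgue measure this should follow from comparing total expectations once the basic identity $\E{e^{\gamma n}\CR(z,D\setminus A_n)^{\gamma^2/2}\mathbf 1_{z\notin A_n}}=\CR(z,D)^{\gamma^2/2}$ is established via an optional stopping argument applied to the process $(\log\CR(z,D)-\log\CR(z,D\setminus A_t))_{t\ge 0}$, whose law as a function of the FPS level is known.

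Next, I combine the decomposition with the identity that the expected Liouville mass in a domain $D'$ of any Borel $K$ equals $\int_K\CR(z,D')^{\gamma^2/2}\,dz$ (again Theorem \ref{thm::subcritical_mollified}) to compute
\[
\E{\mu^\gamma(\Os)\mid A_n}=e^{\gamma n}\sum_{D'\in\mathcal A_n}\int_{\Os\cap D'}\CR(z,D')^{\gamma^2/2}\,dz=M_n^\gamma(\Os),
\]
using that $A_n$ is Lebesgue-null. Thus $(M_n^\gamma(\Os))_{n\in\N}$ is automatically a Doob martingale with respect to $\mathcal F_n:=\sigma(A_n)$ and is uniformly integrable, so Doob's convergence theorem gives a.s.\ and $\mathcal L^1$ convergence of $M_n^\gamma(\Os)$ to $\E{\mu^\gamma(\Os)\mid\mathcal F_\infty}$, with $\mathcal F_\infty=\sigma(\bigcup_n A_n)$. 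Identifying this limit with $\mu^\gamma(\Os)$ itself reduces to showing that $\mu^\gamma(\Os)$ is $\mathcal F_\infty$-measurable, which should follow from the known fact that $\bigcup_n A_n$ fills $D$ densely enough that $\Gamma$ itself is $\mathcal F_\infty$-measurable modulo null sets.

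Finally, I pass from pointwise convergence $M_n^\gamma(\Os)\to\mu^\gamma(\Os)$ to weak convergence of measures: on a common full-measure event I obtain convergence for a countable family of test sets --- for instance open balls with rational centers and radii, together with $D$ itself --- and then the Portmanteau theorem combined with a.s.\ convergence of total masses $M_n^\gamma(D)\to\mu^\gamma(D)$ yields a.s.\ weak convergence of $M_n^\gamma$ to $\mu^\gamma$. The main obstacle I anticipate is the careful handling of the interaction between the thin random set $A_n$ and the singular measure $\mu^\gamma$: verifying $\mu^\gamma(A_n)=0$ a.s.\ and the $\mathcal F_\infty$-measurability of $\mu^\gamma$ both rely on the detailed local-set theory of the GFF from \cite{ALS,ALS2,APS}, and it is there, rather than in the algebraic identity $\E{\mu^\gamma(\Os)\mid A_n}=M_n^\gamma(\Os)$ above, that the real work sits.
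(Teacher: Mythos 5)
The paper itself does not prove this statement; it is imported verbatim from \cite[Proposition 4.1]{APS}, so there is no in-paper proof to compare against. Your sketch nevertheless lays out what is almost certainly the cited argument: strong Markov at $A_n$ gives the decomposition $\Gamma = n + \Gamma^{A_n}$ with $\Gamma^{A_n}$ splitting into conditionally independent zero-boundary GFFs on the components; matching mollified approximations on compacta inside each component recovers \eqref{decomponsition} on $D\setminus A_n$; the one-point identity $\E{e^{\gamma n}\CR(z,D\setminus A_n)^{\gamma^2/2}} = \CR(z,D)^{\gamma^2/2}$ simultaneously gives $\mu^\gamma(A_n)=0$ and the Doob martingale property; and a countable-test-set argument upgrades a.s.\ convergence of masses to weak convergence of measures. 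This is the right architecture, and I do not see a step that would fail.

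You correctly flag the one-point identity and the $\F_\infty$-measurability of $\mu^\gamma$ as the places where the real work sits, and these are indeed the nontrivial external inputs: the former rests on knowing the explicit law of $n\mapsto\log\CR(z,D\setminus A_n)^{-1}$ under $\P$ coming from the FPS construction of \cite{ALS,ALS2} (it is not a generic optional-stopping consequence but requires naming that law), and the latter requires both $\overline{\bigcup_n A_n}=\overline D$ and recoverability of $\Gamma$ from the exploration, again FPS-specific facts. Two smaller points worth acknowledging in a full write-up: for the Doob argument one needs $(\F_n)$ to actually be a filtration, and the nestedness should be seen as coming from the iterative exploration $A_{n+1}$ from $A_n$ (i.e.\ the filtration is really $\sigma(A_n,\Gamma_{A_n})$) rather than from a naive inclusion of the $\sigma$-algebras generated by the sets alone; and the step where you identify the two limits $\mu^\gamma(K)$ and $e^{\gamma n}\tilde\mu^\gamma_{D'}(K)$ uses that the two approximating sequences agree for small $\eps$ and hence their limits-in-probability coincide a.s., which is fine but should be stated.
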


Let us remark that such a simple construction of the Liouville measure was first proposed (but not proved) in \cite{Ai}.

\subsubsection{Critical regime} For $\gamma\ge 2$ it is known, \cite{RV,APS}, that the approximate measures \eqref{eqn::moll_approx} and \eqref{eqn::mn_def} converge to the zero measure almost surely. Thus, to obtain a non-trivial limit in these cases one must renormalise differently. We concern ourselves here only with the critical case $\gamma=2$. 

In this case there are two procedures that one can use to obtain a non-trivial limit \cite{DSRV,DSRV2}. The first is a deterministic renormalisation, known as the Seneta--Heyde rescaling, where one considers the sequence of measures 
either 
\begin{equation}
\sqrt{\log(1/\eps)} \mu_\eps^{\gamma=2} \quad \text{or} \quad \sqrt{n}M_n^{\gamma=2} 
\end{equation}
depending on whether you want to approximate using mollifiers or local sets. The second is a random ``derivative'' normalisation, where one considers the sequence of measures either
\begin{eqnarray}
D_\eps(dz)& := & (-\Gamma_\eps(z)+2\log (1/\eps))\e^{2 \Gamma_\eps(z)}\eps^{2}dz  \quad \text{or} \\ \nonumber  D_n(dz) & := & (-n+2\log \CR^{-1}(z,D\setminus A_n)) \e^{2n}\CR(z,D\setminus A_n)^{2} dz.
\end{eqnarray}

It is now known that all the above approximations converge to the same (up to a constant) limiting measure. The following is a combination of results of \cite{DSRV, DSRV2, HRVdisk, JS, EP}: 
\begin{thm}\label{thm::critical_def}
	The sequence of measures $D_\eps$ converge weakly in probability as $\eps\to 0$ to a limiting measure $\mu'$. Furthermore, $\sqrt{\log(1/\eps)}\mu_\eps^{\gamma=2}$ converges weakly in probability to $\sqrt{\frac{2}{\pi}} \mu'$.
\end{thm}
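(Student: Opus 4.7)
The plan is to work throughout with circle-average approximations $\Gamma_\eps(z)$, using the universality principle that any other reasonable mollifier gives the same limit up to known comparison estimates. The key structural observation is that for each fixed $z \in D$ bounded away from $\partial D$, the process $t \mapsto \Gamma_{\e^{-t}}(z)$ is a standard Brownian motion up to a bounded additive correction, so all local computations reduce to ballot-type estimates for Brownian motion with drift $2$.

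For the convergence of $D_\eps$, the first step is to observe that, for any smooth compactly supported nonnegative test function $f$, the process
\[ M_t(f) := \int_D f(z)\,(2t - \Gamma_{\e^{-t}}(z))\,\e^{2\Gamma_{\e^{-t}}(z) - 2t}\,dz \]
is a local martingale in $t = \log(1/\eps)$, obtained by differentiating the subcritical martingale $\int f(z)\,\e^{\gamma \Gamma_{\e^{-t}}(z) - \gamma^2 t/2}\,dz$ in $\gamma$ at $\gamma=2$. To turn this into genuine convergence, I would introduce the truncated functional $D_\eps^\alpha(f)$ obtained by further restricting the integrand to those $z$ for which $\sup_{s \leq t}(\Gamma_{\e^{-s}}(z) - 2s) \leq \alpha$; on this event the integrand is nonnegative, and a first-moment ballot estimate yields that $D_\eps^\alpha(f)$ is an $L^1$-bounded nonnegative supermartingale. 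Consequently it converges in $L^1$ to a nondecreasing family $\mu'_\alpha(f)$, and a separate estimate shows that $D_\eps(f) - D_\eps^\alpha(f) \to 0$ in probability as $\alpha \to \infty$, uniformly in $\eps$. Setting $\mu'(f) := \lim_\alpha \mu'_\alpha(f)$ gives the desired nontrivial in-probability limit, and nontriviality follows from an explicit lower bound for $\mu'_\alpha(f)$ for some $\alpha$.

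For the Seneta-Heyde comparison, I would use a size-biasing / Girsanov argument. Under the probability measure weighted by $\sqrt{t}\,\mu^{\gamma=2}_{\e^{-t}}(\Os)$, a point $z_* \in \Os$ sampled proportionally to Lebesgue acquires an additional drift in the approximating Brownian motion $(\Gamma_{\e^{-s}}(z_*))_{s\leq t}$, which in the limit $t\to\infty$ (together with a further truncation in $\alpha$) becomes a Brownian motion conditioned to stay below the barrier $2s+\alpha$. Performing the analogous biasing for $D_\eps$ produces a Bessel-$3$-type conditioning instead, and the ratio of the two renormalizations reduces to the classical ballot asymptotic $\P(B_t \in dy,\, \sup_{s\leq t} B_s \leq x) \sim \sqrt{2/\pi}\,(x-y)\,x\,t^{-3/2}\,dy$ for bounded $x,y$, whose prefactor is exactly the constant distinguishing the Seneta-Heyde from the derivative normalization.

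The main technical obstacle is the $L^1$-control of $D_\eps^\alpha(f)$: standard second-moment methods fail at the critical parameter $\gamma = 2$, so one must rely on delicate one-step ballot estimates combined with a peeling of the Brownian trajectory, following the branching-random-walk strategy adapted to the log-correlated setting. A secondary subtlety is transferring convergence from circle averages to a general mollifier $\rze$, since the logarithmic prefactor $2t - \Gamma_\eps(z)$ amplifies small discrepancies, so one needs uniform-in-$\eps$ control of $\Gamma_\eps(z) - \Gamma_{\e^{-t}}(z)$ that survives multiplication by a factor of size $t$.
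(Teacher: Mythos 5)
The paper itself does not prove Theorem~\ref{thm::critical_def}: it is imported as a combination of external results \cite{DSRV,DSRV2,HRVdisk,JS,EP}, so there is no ``paper proof'' to match against. Your sketch does track the broad architecture of those works (truncated derivative functional, ballot-type $L^1$-control via peeling, Brownian meander vs.\ Bessel-$3$ producing the constant $\sqrt{2/\pi}$), but two steps as written would fail.

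First, the truncated integrand is not nonnegative. On the event $\sup_{s\le t}(\Gamma_{\e^{-s}}(z)-2s)\le\alpha$ one only has $2t-\Gamma_{\e^{-t}}(z)\ge-\alpha$. The construction in \cite{DSRV} (following Biggins--Kyprianou and A\"id\'ekon--Shi) adds $\alpha$ to the integrand, i.e.\ works with
\[ \int f(z)\,\bigl(\alpha+2t-\Gamma_{\e^{-t}}(z)\bigr)\,\e^{2\Gamma_{\e^{-t}}(z)-2t}\,\1_{\{\sup_{s\le t}(\Gamma_{\e^{-s}}(z)-2s)\le\alpha\}}\,dz, \]
which is what makes the object both nonnegative \emph{and} a closed martingale via the classical ballot/reflection identity. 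The extra $\alpha\int f\,\e^{2\Gamma-2t}\1_{\{\cdot\}}\,dz$ term is discarded at the end because $\mu^{\gamma=2}_\eps\to 0$. Without this shift neither the nonnegativity nor the (super)martingale argument gets off the ground. Second, and more structurally: circle averages of a zero-boundary GFF do \emph{not} give a martingale in the natural filtration $\F_t=\sigma(\Gamma_{\e^{-s}}(w):s\le t,\,w\in D)$. Overlapping circles at nearby centres share information across scales, so the conditional law of $\Gamma_{\e^{-t}}(z)$ given $\F_s$ is not determined by $\Gamma_{\e^{-s}}(z)$ alone; hence $\int f\,\e^{\gamma\Gamma_{\e^{-t}}-\gamma^2t/2}\,dz$ is not a local martingale and your $M_t(f)$, $D^\alpha_\eps(f)$ are not the martingale and supermartingale you claim. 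The cited works handle this by proving convergence first for a $\star$-scale invariant white-noise decomposition, where the exploration is nested and the martingale structure is exact, and then transferring to circle averages and general mollifiers via Gaussian comparison and uniqueness theorems (\cite{JS,EP}); the present paper sidesteps it entirely by replacing circle averages with first passage sets, whose strong Markov property yields a genuine nested martingale (Theorem~\ref{thm::subcritical_cascades}). Your ``secondary subtlety'' about controlling $\Gamma_\eps-\Gamma_{\e^{-t}}$ under multiplication by $t$ is real, but it is downstream of this more basic defect.
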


\begin{thm}{\cite[Proposition 6.4 and Theorem 6.6]{APS}}\label{thm::critical_cascades}
	The sequence of measures $D_n$ converge weakly almost surely to $\mu'$ as $n\to \infty$. Furthermore, the sequence of measures $\sqrt{n}M_n^{\gamma=2}$ converge weakly in probability to $\frac{2}{\sqrt{\pi}}\mu'$. 
\end{thm}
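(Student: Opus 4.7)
The plan is to exploit the exact branching structure provided by the first passage sets: conditionally on $A_n$, the field $\Gamma$ restricted to each component $D'$ of $D\setminus A_n$ is an independent zero-boundary GFF in $D'$, shifted by the deterministic harmonic extension $h_{A_n}=n$. This turns the construction of $D_n$ and $M_n^{\gamma=2}$ into a continuous-parameter analogue of a branching random walk, so one can try to adapt A\"id\'ekon's argument for the convergence of the derivative martingale together with Madaule's Seneta--Heyde estimates to the GFF setting, using the FPS decomposition \eqref{decomponsition} as the recursive backbone.

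\textbf{Step 1 (martingale and derivative structure).} For $\gamma<2$, the local set property together with Theorem~\ref{thm::subcritical_cascades} implies that $M_n^\gamma(\Os)$ is a martingale in $n$ for every Borel $\Os\subset D$. A direct computation gives
\[
D_n(dz) = -\,\partial_\gamma\big|_{\gamma=2}\bigl(\e^{\gamma n}\,\CR(z,D\setminus A_n)^{\gamma^2/2}\bigr)\,dz,
\]
so, after justifying the exchange of $\partial_\gamma$ with conditional expectation using the Gaussian moments of $\Gamma_{A_n}$, $D_n(\Os)$ is a signed martingale with respect to $(\sigma(A_n,\Gamma_{A_n}))_n$.

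\textbf{Step 2 (convergence and identification).} Since $D_n$ is signed, its convergence is not automatic. I would truncate along the FPS exploration by restricting to the event that $-\Gamma_{A_m}(z)\ge -K$ for all $m\le n$ and $z$ in the relevant component, producing a non-negative modified martingale $D_n^K$. A second-moment estimate (using uniform bounds on the conformal radii of the FPS components and on the Brownian-type fluctuations of the harmonic extensions $\Gamma_{A_n}$) then gives a.s. convergence $D_n^K\to \mu'_K$; the tightness estimates in \cite{DSRV,DSRV2} allow one to remove the barrier and obtain $D_n\to \mu'_\infty$ weakly a.s. To identify $\mu'_\infty$ with the mollifier-constructed $\mu'$ of Theorem~\ref{thm::critical_def}, I would differentiate \eqref{decomponsition} in $\gamma$ at $\gamma=2$, note that the extra term $n\sum_{D'}\tilde\mu^{\gamma=2}_{D'}(\Os\cap D')$ vanishes, and deduce the cascade recursion
\[
\mu'(\Os)\,\big|\,A_n \;\stackrel{d}{=}\; \e^{2n}\sum_{D'\in\mathcal A_n}\tilde\mu'_{D'}(\Os\cap D'),
\]
which is satisfied by both candidates and characterises $\mu'$ up to a multiplicative constant; the constant is then pinned down by matching a single expectation on a simple domain.

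\textbf{Step 3 (Seneta--Heyde).} With $D_n\to \mu'$ in hand, I would obtain convergence of $\sqrt n\, M_n^{\gamma=2}$ by conditioning on $A_{n-k}$, writing
\[
\sqrt n\, M_n^{\gamma=2}(\Os) = \sum_{D'\in\mathcal A_{n-k}} \sqrt n\, M_{k,D'}^{\gamma=2}(\Os\cap D'),
\]
and applying a Kesten-type local limit theorem for a Brownian motion killed at $0$ to show that each summand is asymptotically a universal constant times the corresponding contribution to $D_{n-k}$. The compatibility of the constant $\tfrac{2}{\sqrt\pi}$ with the mollifier constant $\sqrt{2/\pi}$ in Theorem~\ref{thm::critical_def} comes from the fact that one FPS step of height $n$ produces roughly twice the log-variance of one mollifier scale $\log(1/\eps)$, contributing an extra $\sqrt 2$.

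The main obstacle I expect is Step 2: the signed nature of $D_n$ means its convergence cannot be obtained from general principles, and the truncated second-moment estimate must be carried out in the continuous GFF setting rather than on a regular tree. In particular, one needs uniform control on both the conformal radii of the FPS components and the fluctuations of the harmonic extension $\Gamma_{A_n}$ along the exploration, which is substantially more delicate than the branching random walk analogue.
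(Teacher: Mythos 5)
Note first that Theorem~\ref{thm::critical_cascades} is quoted from \cite{APS} and is not proved in this paper; what follows compares your sketch with the argument actually given in that reference. Your Step~1 and the overall shape of the plan (derivative martingale, barrier truncation, Seneta--Heyde via a local limit theorem for the conditioned walk) are correct, but Step~2 contains two genuine gaps. The truncation barrier you propose is vacuous: for a first passage set, $\Gamma_{A_m}$ restricted to $D\setminus A_m$ is identically equal to $m$, so the event ``$-\Gamma_{A_m}(z)\ge -K$ for all $m\le n$'' reduces to $n\le K$. The barrier must instead be placed on the rooted random walk $S_k(z)=-2k+4\log\CR^{-1}(z,D\setminus A_k)$, via events of the form $E_\eta(n,z)=\{S_k(z)\ge -2\eta,\ 0\le k\le n\}$; the nonnegative truncated martingale is then essentially $\int\bigl(\tfrac12 S_n(z)+\eta\bigr)\1_{E_\eta(n,z)}\,M_n^{2}(dz)$, and controlling it and then removing $\eta$ is exactly where Proposition~3.6 of \cite{EP} (uniform integrability of cut-off derivative martingales) is used.

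The more serious gap is the identification of the limit. You assert that the cascade recursion determines $\mu'$ up to a multiplicative constant, but uniqueness of fixed points of a critical smoothing transform in a continuum log-correlated setting is a delicate statement that is not available as a black box here, and pinning the constant by ``matching a single expectation'' fails because the derivative martingale is not uniformly integrable. The route taken in \cite{APS} bypasses this entirely: one shows that the FPS derivative martingale has the same limit as the circle-average derivative martingale of \cite{EP} (by interleaving the two filtrations and using the uniform integrability above), and then invokes Theorem~1.1 of \cite{EP}, which already identifies the circle-average limit with the $\mu'$ of \cite{DSRV,DSRV2}. Finally, the constant $2/\sqrt{\pi}$ in the Seneta--Heyde statement comes from the normalisation of $D_n$ (the prefactor is $\tfrac12 S_n(z)$, not $S_n(z)$) combined with the Kesten constant for a walk of variance $2$, not from the variance-matching heuristic you give between FPS levels and mollifier scales, which taken literally produces $1/\sqrt{\pi}$.
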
	

\subsection{Rooted measures} \label{sec::com}

One of the key techniques used to study chaos measures, is to work with certain ``rooted'' probability measures. This idea goes back to Peyri\`ere \cite{KP}. It has been widely used in the classical ``spine'' theory of branching processes \cite{BK}, as well as to study the law of the field plus a ``typical'' point under the Liouville measure \cite{DS}. We will introduce them in the setting of FPS, as employed in \cite{APS}.

We define, for $\gamma\ge 0$, a probability measure on the field $\Gamma$ plus a distinguished point $Z$, by
\begin{equation} \label{eqn::def_rooted_meas}
\left.\hat{\mathbb{P}}^*_\gamma(d\Gamma, dz)\right|_{\mathcal{F}_{n}^*}:= \frac{\e^{\gamma n}\CR(z,D\setminus A_n)^{\gamma^2/2}}{\int_D \CR(x,D)^{\gamma^2/2} \, dx} \mathbb{P}(d\Gamma) \, dz,
\end{equation}
where $\mathcal{F}_{n}^*:=\sigma(Z)\vee\sigma(A_n)$, and set $\hat{\mathbb{P}}^*:=\hat{\mathbb{P}}^*_2$ (this is the measure we will work with most often). We make the following straightforward observations concerning the law of the random variables $\Gamma$ and $Z$ under $\hat{\mathbb{P}}^*_\gamma$:
\begin{itemize}
	\item The marginal law of the field $\Gamma$  restricted to $\sigma(A_n)$ is given by $(M_0^\gamma(D))^{-1}M_n^\gamma(D) \mathbb{P}(d\Gamma)$ and is absolutely continuous w.r.t the law of the GFF.
	\item The marginal law of the point $Z$ has density (w.r.t Lebesgue measure) $\propto \CR(z,D)^{\gamma^2/2}$.
	\item Conditionally on $A_n$, the point $Z$ is chosen proportionally to $M_n^\gamma(dz)$.
\end{itemize}

Slightly less immediate is the following description of the conditional law of the field given the point $Z$, see for example \cite[Lemma 2.1]{Anotes} for a proof of the following statement, which concerns the subcritical regime:
\begin{lemma}\label{lem::rooted_measure}
	For $\gamma\in (0,2)$, one can sample from $\hat{\mathbb{P}}^*_\gamma$ by first sampling the point $Z$ proportionally to $\CR(Z,D)^{\gamma^2/2}$, and then sampling the field according to the law of $\Gamma+ \gamma G_D(Z,\cdot)$. We write $\hat{\mathbb{P}}^*_{\gamma,z}$ for the law of $\Gamma+\gamma G_D(z,\cdot)$.	
\end{lemma}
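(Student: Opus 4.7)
The plan is to verify separately the marginal law of $Z$ and the conditional law of $\Gamma$ given $Z=z$ under $\hat{\mathbb{P}}^*_\gamma$. For the marginal, integrating $\Gamma$ out of \eqref{eqn::def_rooted_meas} gives the density of $Z$ restricted to $\mathcal{F}_n^*$ as $\E{e^{\gamma n}\CR(z,D\setminus A_n)^{\gamma^2/2}}\big/\int_D \CR(x,D)^{\gamma^2/2}\,dx$; by the martingale property recorded in Theorem~\ref{thm::subcritical_cascades}, the numerator equals $\CR(z,D)^{\gamma^2/2}$ and is independent of $n$, which is the first claim.

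For the conditional law, I would reduce the statement to the identity
\begin{equation}\label{eqn::CM_target}
\int_D g(z)\,\E{F(\Gamma)\,e^{\gamma n}\CR(z,D\setminus A_n)^{\gamma^2/2}}\,dz = \int_D g(z)\,\CR(z,D)^{\gamma^2/2}\,\E{F(\Gamma+\gamma G_D(z,\cdot))}\,dz,
\end{equation}
for every bounded $\sigma(A_n)$-measurable $F(\Gamma)$ and every continuous bounded $g\colon D\to\R$, and establish it via a mollifier approximation together with a Cameron--Martin shift. Setting $G_\epsilon(z,\cdot):=G_D*\rho_z^\epsilon\in H^1_0(D)$, Cameron--Martin for the GFF gives
$$\E{F(\Gamma+\gamma G_\epsilon(z,\cdot))} = \E{F(\Gamma)\,e^{\gamma\Gamma_\epsilon(z)-\gamma^2\operatorname{Var}(\Gamma_\epsilon(z))/2}},$$
and inserting $\operatorname{Var}(\Gamma_\epsilon(z)) = -\log\epsilon+\log\CR(z,D)+o(1)$ this rearranges to
$$\CR(z,D)^{\gamma^2/2}\,\E{F(\Gamma+\gamma G_\epsilon(z,\cdot))} = \epsilon^{\gamma^2/2}\,\E{F(\Gamma)\,e^{\gamma\Gamma_\epsilon(z)}}\,(1+o(1)).$$
Integrating both sides against $g(z)\,dz$ and applying Fubini, the right-hand side becomes $\E{F(\Gamma)\int g\,d\mu_\epsilon^\gamma}(1+o(1))$. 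The $\mathcal{L}^1$-convergence $\mu_\epsilon^\gamma(\Os)\to\mu^\gamma(\Os)$ (Theorem~\ref{thm::subcritical_mollified}), boundedness of $F$, and the tower property with $\E{\mu^\gamma(\cdot)\mid\sigma(A_n)}=M_n^\gamma(\cdot)$ (Theorem~\ref{thm::subcritical_cascades}) then identify the $\epsilon\to 0$ limit with the left-hand side of \eqref{eqn::CM_target}. On the other side of the Cameron--Martin identity, I would use $G_\epsilon(z,\cdot)\to G_D(z,\cdot)$ to pass $\E{F(\Gamma+\gamma G_\epsilon(z,\cdot))}$ to $\E{F(\Gamma+\gamma G_D(z,\cdot))}$, producing \eqref{eqn::CM_target}.

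The main technical obstacle is this last passage to the limit, since $F$ depends on the shifted field through its FPS at level $n$, which is not a continuous functional of the field. The cleanest way to handle it is to prove \eqref{eqn::CM_target} first for $F$ a cylindrical smooth functional (depending on $\Gamma$ through finitely many test functions supported away from $z$), for which the required continuity in the shift is immediate, and then extend to general bounded $\sigma(A_n)$-measurable $F$ by a monotone class/density argument, using that $A_n$ is itself measurable with respect to the underlying GFF.
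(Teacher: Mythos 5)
The paper does not prove this lemma itself --- it defers to \cite[Lemma 2.1]{Anotes} --- so I assess your argument on its own merits. Your treatment of the marginal of $Z$ is correct, and the overall strategy for the conditional law (Cameron--Martin at the mollified level, Fubini, pass $\eps\to0$ using the $\mathcal L^1$-convergence of $\mu^\gamma_\eps$, then the $\sigma(A_n)$-martingale property) is indeed the standard route and is sound in its main lines.

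There is, however, a genuine gap in the way you organize the final monotone class step. You propose to ``prove \eqref{eqn::CM_target} first for $F$ a cylindrical smooth functional \dots and then extend to general bounded $\sigma(A_n)$-measurable $F$.'' But \eqref{eqn::CM_target} is \emph{false} for general cylindrical $F$: its left-hand side equals $\E{F(\Gamma)\int g\,dM_n^\gamma}$, and by the tower property this equals $\E{\E{F(\Gamma)\mid \sigma(A_n)}\int g\,d\mu^\gamma}$, whereas the right-hand side equals $\E{F(\Gamma)\int g\,d\mu^\gamma}$. These two coincide if and only if $F$ is $\sigma(A_n)$-measurable (up to a projection); taking, say, a bounded function of $\int g\,d\mu^\gamma$ as $F$ gives a strict inequality. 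So the class of $F$ for which \eqref{eqn::CM_target} holds is not large enough to run a monotone class argument starting from smooth cylindrical functionals: you cannot first verify \eqref{eqn::CM_target} on that generating class.

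The fix is a reordering of your own steps rather than a new idea. First carry out the Cameron--Martin/$\eps\to0$ argument \emph{without} the $A_n$-conditioning, to establish
\[
\int_D g(z)\,\CR(z,D)^{\gamma^2/2}\,\E{F(\Gamma+\gamma G_D(z,\cdot))}\,dz = \E{F(\Gamma)\int_D g\,d\mu^\gamma}
\]
for smooth cylindrical $F$; here both the passage $G_\eps(z,\cdot)\to G_D(z,\cdot)$ (since $\int G_\eps(z,w)f(w)\,dw \to \int G_D(z,w)f(w)\,dw$ for $f$ smooth) and the passage $\mu^\gamma_\eps\to\mu^\gamma$ in $\mathcal L^1$ are unproblematic, and the tower property is not yet invoked. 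This identity \emph{is} stable under the monotone class theorem (both sides are linear in $F$, pass through bounded monotone limits by dominated convergence, and cylindrical smooth functionals form a multiplicative class generating $\sigma(\Gamma)$), so it extends to \emph{all} bounded $\sigma(\Gamma)$-measurable $F$. Only then, for $F$ that is moreover $\sigma(A_n)$-measurable (which is legitimate since $A_n$ is a measurable function of $\Gamma$), apply $\E{\mu^\gamma(\cdot)\mid \sigma(A_n)}=M_n^\gamma(\cdot)$ on the right-hand side to convert it to $\int_D g(z)\,\E{F(\Gamma)\,e^{\gamma n}\CR(z,D\setminus A_n)^{\gamma^2/2}}\,dz$. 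This recovers \eqref{eqn::CM_target} for the class it actually holds for, and identifies the conditional law of $\Gamma$ given $Z$ under $\hat{\mathbb P}^*_\gamma$ on each $\mathcal F_n^*$, from which the lemma follows by consistency of the projective family. With this reorganization your proof is correct and, as far as I can tell, essentially the argument of the cited reference.
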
 
This lemma tells us roughly that, for $\gamma<2$, the GFF around a typical point sampled from the Liouville measure has an additional $\gamma$ singularity. In the case of the FPS approximation, this is encoded in a certain random walk. More precisely, if under the conditional law $\hat{\mathbb{P}}_\gamma^*(\cdot | Z)$ we set
\begin{equation}
S^\gamma_n= S^\gamma_n(Z):= -\gamma n+\gamma^2 \log\CR^{-1}(Z,D\setminus A_n)
\end{equation}
then $S^\gamma_n(Z)-S^\gamma_0(Z)$ is a centred random walk, the law of whose increments do not depend on the point $Z$, and have mean zero and variance $\gamma$ (see Remark 6.3 of \cite{APS}). 

In fact for us, the most important case is when $\gamma = 2$, which is not included in Lemma \ref{lem::rooted_measure}. However, we still know (see \cite[Section 6]{APS}) that under $\hat{\mathbb{P}}^*=\hat{\mathbb{P}}_2$ and conditionally on the point $Z$, the random walk $S_n := S^{2}_n$ is as described in the previous paragraph, with $\gamma=2$. 

\section{Preliminary lemmas}

Here, we collect a few slightly technical preliminary lemmas. One may safely skip this section in the first reading, and only return to them when they appear in the main proofs.

\subsection{Uniform control of Liouville moments} 

We will need some control on the moments of the subcritical Liouville measures. In the following we write $\mu_D^\gamma$ for the $\gamma$-Liouville measure defined from a zero boundary Gaussian free field on $D$ as in Theorem \ref{thm::subcritical_mollified}.
\begin{lemma}\label{lem::pmoment}
	Let $p=p(\gamma):=1+\frac{2-\gamma}{2}$. There exists a universal $K>0$, such that for any $f:\C\to [0,1]$, any $\gamma\in (1,2)$, and any simply connected domain $D\subseteq \D$, if $\mu^\gamma_D$ is the limiting measure associated with a zero boundary GFF $\Gamma$ on $D$:
	\[ \mathbb{E}\left [\left (\int_D f(z) \mu^\gamma_D(dz)\right )^p\right ] \leq K \mathrm {Area}(D)^{(p-1)(1+\gamma^2/4)} \int_D f(z)^p \CR(z,D)^{\gamma^2/2} dz.\] 
\end{lemma}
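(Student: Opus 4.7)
The natural starting point is the pointwise Hölder inequality
$$\left(\int_D f \, d\mu^\gamma\right)^p \leq \mu^\gamma(D)^{p-1} \int_D f^p \, d\mu^\gamma,$$
which puts the desired $f^p$ factor in place. Taking expectations and applying the rooted-measure machinery of Section \ref{sec::com}, I would weight by $\int f^p d\mu^\gamma$, which selects a distinguished point $z\in D$ with marginal density proportional to $f^p(z)\CR(z,D)^{\gamma^2/2}$ and shifts the field conditionally on $z$ by $\gamma G_D(z,\cdot)$, turning $\mu^\gamma(dw)$ into $e^{\gamma^2 G_D(z,w)}\mu^\gamma(dw)$. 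This yields
$$\mathbb{E}\left[\left(\int_D f \, d\mu^\gamma\right)^p\right] \leq \int_D f^p(z)\, \CR(z,D)^{\gamma^2/2} \, \mathbb{E}\left[\left(\int_D e^{\gamma^2 G_D(z,w)} \mu^\gamma(dw)\right)^{p-1}\right] dz,$$
and the task reduces to bounding the inner $(p-1)$-th moment uniformly by $K\, \mathrm{Area}(D)^{(p-1)(1+\gamma^2/4)}$.

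Since $p-1 = (2-\gamma)/2 \in (0,1/2]$, the map $x \mapsto x^{p-1}$ is both concave and subadditive. I would split the inner integral into a far part $\{|z-w| \geq r\}$ and a near part $\{|z-w| < r\}$ and use $(a+b)^{p-1} \leq a^{p-1} + b^{p-1}$. On the far part, using $G_D \leq G_\mathbb{D}$ and $\CR(w, D) \leq 2$ in $\mathbb{D}$, Jensen's inequality and a direct integration give a bound of the form $r^{-\gamma^2(p-1)} \mathrm{Area}(D)^{p-1}$. On the near part, I would dyadically decompose into annuli $B(z, 2^{-k}) \setminus B(z, 2^{-k-1})$ for $k \geq \log_2(1/r)$; on each annulus $e^{\gamma^2 G_D(z,w)} \leq C 2^{k\gamma^2}$, and combined with the standard scaling estimate $\mathbb{E}[\mu^\gamma(B(z,2^{-k}))^{p-1}] \lesssim 2^{-k\zeta(p-1)}$ (where $\zeta(q) = (2+\gamma^2/2)q - \gamma^2 q^2/2$), one obtains a geometric series that sums because $p-1$ stays strictly below the critical moment $4/\gamma^2$. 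Optimising the cutoff $r$ should then reproduce the exponent $(p-1)(1+\gamma^2/4)$ on $\mathrm{Area}(D)$.

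The main obstacle will be to verify that the constant $K$ is genuinely uniform as $\gamma \to 2$. The rate of the dyadic geometric series, $\zeta(p-1) - \gamma^2(p-1) = (p-1)(2 - \gamma^2 p/2)$, vanishes quadratically in $(2-\gamma)$, so naive summation produces a prefactor blowing up like $(2-\gamma)^{-2}$. The saving feature must be that our specific choice $p = 1 + (2-\gamma)/2$ keeps the gap $4/\gamma^2 - (p-1)$ bounded away from $0$ uniformly in $\gamma \in (1,2)$, and that the loose exponent $(1+\gamma^2/4)$ in the claimed bound (strictly larger than the Jensen exponent $1-\gamma^2/2$ that would suffice when $\gamma^2 < 2$) leaves enough slack in $\mathrm{Area}(D)^{(p-1)(1+\gamma^2/4)}$ to absorb the apparent degeneration. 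Careful bookkeeping of the constants in the scaling moment estimate together with the optimised cutoff $r$ should then deliver the universal constant.
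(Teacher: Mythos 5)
Your first two steps (the pointwise H\"older bound and the passage to the rooted measure) reproduce what the paper does, essentially verbatim up to replacing Jensen on the probability measure $\mu^\gamma/\mu^\gamma(D)$ with H\"older --- these are equivalent. The paper then splits $D$ into a ball $B^z$ of radius $\CR(z,D)/8$ around $z$ and its complement, handles the far part exactly as you do (bounded Green's function, Jensen, $\CR(y,D)^2\le 10\,\mathrm{Area}(D)$), and for the near part rescales $B^z$ to $\D$ and uses Kahane's convexity to reduce to a single uniform bound $\hat{\mathbb E}^*_{\gamma,0}[\mu^\gamma_\D(\D)^{p-1}]\le C$ on the unit disc (Lemma~\ref{lem::centre_z_pmoment}). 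So the overall architecture is the same; the difference, and the gap, is in how the near-part $(p-1)$-th moment is controlled.

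The gap is precisely the one you flag, and your proposed escape does not work. With dyadic annuli $B(z,2^{-k})\setminus B(z,2^{-k-1})$, the per-scale contribution after subadditivity is of order $2^{-k(\zeta(p-1)-\gamma^2(p-1))}$ and the exponent $\zeta(p-1)-\gamma^2(p-1)=(p-1)\bigl(2-\gamma^2 p/2\bigr)$ vanishes like $(2-\gamma)^2$. Summing the geometric series thus costs a factor $\bigl(1-2^{-(\zeta(p-1)-\gamma^2(p-1))}\bigr)^{-1}\sim (2-\gamma)^{-2}$. The ``slack'' in the exponent $\mathrm{Area}(D)^{(p-1)(1+\gamma^2/4)}$ cannot absorb this: since $D\subseteq\D$, $\mathrm{Area}(D)\le\pi$, so this prefactor is \emph{bounded above} (and tends to $1$ as $p\to 1$), and cannot compensate a constant that blows up as $\gamma\to 2$. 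Similarly, the gap $4/\gamma^2-(p-1)$ being bounded away from zero guarantees finiteness of the $p$-th moment for each fixed $\gamma$, but has no bearing on uniformity of the constant.

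The missing idea (the content of the paper's Lemma~\ref{lem::centre_z_pmoment}) is to \emph{rescale the annuli with $(p-1)$}: one decomposes the radial variable into intervals $[r_{n+1},r_n]$ with $r_n=2^{-n(p-1)^{-2}}$, i.e.\ each annulus spans $\sim(p-1)^{-2}$ dyadic scales. After using the radial/angular decomposition of the GFF, conditioning on the radial Brownian motion, and controlling the maximum of the (time-changed) Brownian motion over each rescaled interval --- a sub-Gaussian of mean $O((p-1)^{-1})$ and variance $O((p-1)^{-2})$ --- each annulus contributes $\lesssim r_n^{(p-1)(2-\gamma^2+\gamma^3/4)}$ with a constant uniform in $\gamma$. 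The resulting geometric series has ratio $2^{-(2-\gamma^2+\gamma^3/4)(p-1)^{-1}}$, and the crucial cancellation is that
\[
(2-\gamma^2+\gamma^3/4)(p-1)^{-1}=\gamma+2-\tfrac{\gamma^2}{2}>2
\]
for all $\gamma\in[0,2]$, so the sum is bounded uniformly. Without this rescaling, no choice of cutoff $r$ or exponent bookkeeping rescues the dyadic approach; this is the step you would need to add to make your argument go through.
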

\vskip3mm
The key ingredient is a uniform control on $(p-1)$-th moments under the rooted measure, with $p$ as above.
\begin{lemma} \label{lem::centre_z_pmoment}
	There exists a universal constant $C > 0$ such that for all $\gamma\in (0,2)$ and $p=p(\gamma):=1+\frac{2-\gamma}{2}<2$ we have (recalling the definition of $\hat{ \mathbb E}_{\gamma,z}$ from Lemma \ref{lem::rooted_measure})
	\[
	\hat{\mathbb{E}}^*_{\gamma,0}[\mu_\D^\gamma(\D)^{p-1}]\le C.\]
\end{lemma}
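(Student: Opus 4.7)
My plan is to use the cascade decomposition of Theorem~\ref{thm::subcritical_cascades} together with the spine structure under the rooted measure. Writing $D'_0 \in \mathcal A_1$ for the component of $\D \setminus A_1$ containing $0$, under $\hat{\mathbb P}^*_{\gamma, 0}$ one has
\[
\mu^\gamma_\D(\D) = e^\gamma \tilde{\mu}_{D'_0}(D'_0) + e^\gamma \sum_{D' \in \mathcal A_1 \setminus \{D'_0\}} \tilde{\mu}_{D'}(D').
\]
Since $p - 1 \in (0, 1)$, the inequality $(x+y)^{p-1} \leq x^{p-1} + y^{p-1}$ on $[0,\infty)$ reduces the problem to bounding the spine and off-spine contributions separately.

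For the off-spine contribution: conditionally on $A_1$ and $Z = 0$, the measures $\{\tilde{\mu}_{D'}(D')\}_{D' \neq D'_0}$ are Liouville masses of independent zero-boundary GFFs on each $D'$. Writing $S = \sum_{D' \neq D'_0} \tilde{\mu}_{D'}(D')$, Jensen's inequality (using concavity of $x \mapsto x^{p-1}$) together with the first-moment formula $\mathbb E[\tilde\mu_{D'}(D')] = \int_{D'}\CR(w,D')^{\gamma^2/2}dw$ yields
\[
\mathbb{E}\bigl[S^{p-1} \mid A_1, Z\bigr] \leq \bigl(\mathbb E[S \mid A_1, Z]\bigr)^{p-1} = \Bigl(\sum_{D' \neq D'_0} \int_{D'} \CR(w, D')^{\gamma^2/2}\,dw\Bigr)^{p-1} \leq \pi^{p-1},
\]
using the monotonicity $\CR(w, D') \leq \CR(w, \D) \leq 1$ and $\mathrm{Area}(\D) = \pi$. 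After multiplying by $e^{\gamma(p-1)}\leq e^2$ this gives a uniform-in-$\gamma$ bound on the off-spine term.

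For the spine contribution: conformal invariance identifies $\tilde{\mu}_{D'_0}(D'_0)$ (still under the rooted law at $0$) with $\int_\D |\phi_1'(z)|^{2+\gamma^2/2}\,\nu(dz)$, where $\phi_1 \colon \D \to D'_0$ is the uniformising map with $\phi_1(0) = 0$ and $|\phi_1'(0)| = r_1 := \CR(0, D'_0)$, and $\nu$ is an independent fresh copy of the rooted Liouville measure on $\D$. Controlling $|\phi_1'|$ in terms of $r_1$ via Koebe's distortion theorem leads to a recursive estimate
\[
F(\gamma) := \hat{\mathbb E}^*_{\gamma,0}[\mu_\D^\gamma(\D)^{p-1}] \leq C_1 + B_\gamma\,F(\gamma),
\]
where $B_\gamma$ is explicit in terms of $\hat{\mathbb E}^*_{\gamma, 0}[r_1^{(2+\gamma^2/2)(p-1)}]$. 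Under $\hat{\mathbb P}^*_{\gamma, 0}$ one has $-\log r_1 = 1/\gamma + S_1/\gamma^2$ with $S_1$ centred of variance $\gamma$ (Section~\ref{sec::com}), so $B_\gamma$ can be computed using the Laplace transform of $S_1$; the choice $p - 1 = (2-\gamma)/2$ is calibrated so that $B_\gamma$ remains strictly below $1$ with a uniform gap as $\gamma \to 2^-$, giving $F(\gamma) \leq C_1/(1-B_\gamma) \leq C$.

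The main obstacle is this uniform contraction. A naive single-step estimate only gives $1 - B_\gamma \sim (2-\gamma)^2$, which is insufficient because $C_1$ does not vanish at the same rate. The remedy is to iterate $n$ cascade steps: since $S_n$ is centred with variance $n \gamma$, choosing $n = n(\gamma)$ suitably large (using standard Gaussian tail and Laplace-transform estimates) allows one to beat any growth from Koebe distortion and close the recursion uniformly in $\gamma \in (0,2)$. The precise algebraic form $p(\gamma) = 1 + (2-\gamma)/2$ is exactly what makes the insertion-induced shift in the moment balance the random-walk variance in the iterated recursion, yielding the universal constant $C$.
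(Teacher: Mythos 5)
Your approach --- decomposing via the FPS cascade into a spine (the component containing $0$) and an off-spine part, bounding the off-spine contribution by subadditivity and Jensen, and closing a recursion for the spine --- is genuinely different from the paper's. The paper instead uses the radial decomposition $\Gamma = B_{|\cdot|} + \Gamma^\sphericalangle$ of the free field around the origin, applies Jensen conditionally on the radial Brownian motion $B$, and then splits $\D$ into annuli at scales $2^{-n(p-1)^{-2}}$, controlling each annulus by sub-Gaussian estimates for the time-changed Brownian motion and summing the resulting geometric series. That route gives an explicit bound with no recursion and no conformal uniformisation of the cascade cells.

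There is, however, a genuine gap in your spine step. Having written $\tilde\mu_{D'_0}(D'_0) = \int_\D |\phi_1'(z)|^{2+\gamma^2/2}\,\nu(dz)$, you propose to ``control $|\phi_1'|$ in terms of $r_1$ via Koebe's distortion theorem'' to extract a factor $r_1^{2+\gamma^2/2}$ and a fresh copy $\nu(\D)$. This does not go through: Koebe distortion only yields $|\phi_1'(z)| \le r_1\,\frac{1+|z|}{(1-|z|)^3}$, which diverges as $|z|\to 1$, and the fresh rooted measure $\nu$ does carry mass near $\partial\D$. More fundamentally, for a conformal map $\phi_1\colon\D\to D'_0\subset\D$ with $\phi_1(0)=0$ and $|\phi_1'(0)|=r_1$, the ratio $\sup_z|\phi_1'(z)|/r_1$ is not universally bounded --- since $\int_\D |\phi_1'|^2 = \mathrm{Area}(D'_0)$, it is at least of order $\sqrt{\mathrm{Area}(D'_0)}/r_1$, which is unbounded over the possible geometries of $D'_0$. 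Controlling exactly this ratio is what Lemma~\ref{lem::ZbigCR} is later used for in the proof of Lemma~\ref{lem::main2}, and it only gives a probabilistic tail estimate, not a deterministic bound that could be absorbed into your contraction factor $B_\gamma$. The ``iterate $n(\gamma)$ cascade steps'' remedy is asserted but not carried out, and the same distortion problem reappears (compounded) at level $n$, so the claim that standard Laplace-transform estimates ``beat any growth from Koebe distortion'' is not justified. Note that the paper's related Lemma~\ref{lem::pmoment} avoids this issue by comparing via a \emph{linear} rescaling to a \emph{larger} domain $\phi(D)\supset 2\D$ (so the derivative is constant), rather than by conformally uniformising a cascade component.
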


We will first show Lemma \ref{lem::centre_z_pmoment}.
\vskip 0.005\textheight
\begin{proofof}{Lemma \ref{lem::centre_z_pmoment}}
	First, by Lemma \ref{lem::rooted_measure}, we can write
	\begin{equation}\label{eq:: tilted LQG measure}
	\ehs_{\gamma,0}[\mu^\gamma_{\D}(\D)^{(p-1)}] = \E{\left(\int_{\D}\frac{1}{|z|^{\gamma^2}}d\mu^\gamma_{\D}\right)^{p-1}}.
	\end{equation}
	
	Consider the radial decomposition of $\Gamma$, i.e. write (in the sense of distributions) $\Gamma(z)$ as $B_{|z|} + \Gamma^\sphericalangle(z)$, where $B_r$ has the law of a standard Brownian motion when parametrized by $-\log r$ and $\Gamma^\sphericalangle$ is a log-correlated Gaussian field, whose circle-averages around the origin are zero (see e.g. \cite{DMS}). Writing \[d\mu^\gamma_{\Gamma^\sphericalangle}(z):=\lim_{\eps\to 0} \e^{\Gamma^\sphericalangle_\eps(z)}\eps^{\frac{\gamma^2}{2}}\e^{\frac{\gamma^2}{2} \text{var}(B_{|z|})} dz\] for the angular GMC measure, we have 
	\[\E{\left(\int_{\D}\frac{1}{|z|^{\gamma^2}}d\mu_\D^\gamma\right)^{p-1}} = \E{\left(\int_{\D}\frac{1}{|z|^{\gamma^2/2}}e^{\gamma B_{|z|}}d\mu^\gamma_{\Gamma^\sphericalangle}\right)^{p-1}}.\]
	By first conditioning on $B_{|z|}$, using the fact that $\E{ d\mu_{\Gamma^\sphericalangle}^\gamma(z)} = \CR(z,\D)^{\gamma^2/2}dz$ and Jensen's inequality, we can bound the RHS further by a constant times
	\[\E{\left(\int_{\D}\frac{1}{|z|^{\gamma^2/2}}e^{\gamma B_{|z|}}dz\right)^{p-1}}.\]
	Now for $n \in \N $, consider the decomposition of the radial part into intervals of the form 
	\[[r_{n+1}, r_n] = \left [2^{-(n+1)(p-1)^{-2}},2^{-n(p-1)^{-2}}\right ].\] Denote by $R_n$ the corresponding annulus, and observe that:
	\begin{itemize}
		\item $B_{|r_n|}$ is a zero-mean Gaussian of variance $-\log |r_n|$;
		\item the maximum of the process $B_s - B_{r_n}$ (that is a time-changed Brownian motion) over the interval $s \in [r_{n+1},r_n]$ is a sub-Gaussian of mean bounded by an absolute constant times $(p-1)^{-1}$ and of variance bounded by an absolute constant times $(p-1)^{-2}$;
		\item $(2-\gamma)^{-(p-1)} \to 1$.
	\end{itemize}
	Thus, there exist a universal constant $K>0$ such that
	\[\E{\left(\int_{R_n}\frac{1}{|z|^{\gamma^2/2}}e^{\gamma B_{|z|}}dz\right)^{p-1}} \leq K r_n^{(p-1)(2-\gamma^2+\gamma^3/4)}.\]
	Note that $2-\gamma^2+\gamma^3/4$ is strictly positive for $\gamma\in [0,2)$. 
	Thus by the sub-additivity of $x^{p-1}$, we have that
	\[\E{\left(\int_{\D}\frac{1}{|z|^{\gamma^2/2}}e^{\gamma B_{|z|}}dz\right)^{p-1}} \leq K\sum_{n \geq 0} r_n^{(p-1)(2-\gamma^2+\gamma^3/4)} = \frac{K}{1-2^{-(2-\gamma^2+\gamma^3/4)(p-1)^{-1}}}.\]
	Finally, as $(2-\gamma^2+\gamma^3/4)(p-1)^{-1}= \gamma+2-\gamma^2/2>2$ for all $\gamma\in[0,2]$, we can conclude.
\end{proofof}

Let us now prove Lemma \ref{lem::pmoment}.
\vskip 0.005 \textheight

\begin{proofof}{Lemma \ref{lem::pmoment}} As $p < 4/\gamma^2$, standard theory of GMC \cite{KAH} guarantees that $\mathbb{E}[(\mu_D^\gamma(D))^p]$ is finite for any $\gamma \in (0,2)$. Now, by Jensen's inequality we have
	\begin{equation}\label{eq:: pmoment}\mathbb{E}\left[\left (\int_D f(z) \mu_D^\gamma(dz)\right )^p\right]\le \mathbb{E}\left[( \mu_D^\gamma(D))^{p} \int_D f(z)^p \frac{\mu^\gamma_D(dz)}{\mu_{D}^\gamma(D)}
	\right],\end{equation} which by definition of the measure $\hat{\mathbb{E}}_\gamma^*$ is also equal to $\int_D CR(w,D)^{\gamma^2/2}dw$ times 
	\[	\hat{\mathbb{E}}^*_\gamma\left[ \int_D f(z)^p \frac{\mu^\gamma_D(dz)}{\mu^\gamma_D(D)} \times \mu_{D}^\gamma(D)^{p-1}\right]=\hat{\mathbb{E}}^*_\gamma \left[ f(Z)^p\mu^\gamma_D(D)^{p-1}\right]. \]
	Since the marginal density of $Z$ is proportional to $\CR(Z,D)^{\gamma^2/2}$, by conditioning on $Z$ the LHS of \eqref{eq:: pmoment} can be bounded by
	\[
	\int_D \CR(z,D)^{\gamma^2/2} f(z)^p\hat{\mathbb{E}}^*_{\gamma,z}\left[ \mu^\gamma_{D}(D)^{p-1}\right]dz.\]
	
	Now, let $B^z$ be the ball of radius $\CR(z,D)/8$ around $z$ so that $B^z\subset D$. Then, we have that 
	\begin{equation}\label{eqn::splitD} 	
	\hat{\mathbb{E}}^*_{\gamma,z}[\mu^\gamma_{D}(D)^{p-1}] \le \hat{\mathbb{E}}^*_{\gamma,z}[\mu^\gamma_{D}(B^z)^{p-1}] +\hat{\mathbb{E}}^*_{\gamma,z} [\mu_{D}^\gamma(D\setminus B^z)^{p-1}].\end{equation} Since for $y\in D\setminus B^z$, $G_D(z,y)$ is bounded above by some universal constant (by conformal invariance of the Green's function and the distortion theorem \cite[Theorem 3.23]{Lawler}), the second term in \eqref{eqn::splitD} can be bounded by a universal constant times
	\[ \E{\mu_{D}^\gamma(D)^{p-1}} \leq \text{Area}(D)^{p-1}\sup_{y\in D} \CR(y,D)^{\frac{\gamma^2}{2}(p-1)} \le 10^{\frac{\gamma^2}{4}(p-1)}\text{Area}(D)^{(1+\frac{\gamma^2}{4})(p-1)},\] where in the second inequality we use that $\CR(z,D)^2 \leq 10 \text{Area}(D)$.
	
	For the first, we apply the scaling and translation map $\phi:w\mapsto 8(w-z)/\CR(z,D)$, which sends $B^z$ to $\D$ and, by the Koebe $1/4$-Theorem, $D$ to $\phi(D)\supset 2\D$. Letting $\mu_{\phi(D)}^\gamma$ denote the Liouville measure associated to a GFF in $\phi(D)$, by scaling invariance of the GFF and the Green's function we thus see that the first term of \eqref{eqn::splitD} is less than or equal to a universal constant times
	\[
	\CR(z,D)^{(2+\gamma^2/2)(p-1)}\hat{\mathbb{E}}^*_{\gamma,0}[\mu_{\phi(D)}^\gamma(\D)^{p-1}].\]
	This, using again that $\CR(z,D)^2 \leq 10 \text{Area}(D)$ and Kahane's convexity inequality \cite{KAH}, can be bounded by a constant times
	\[\text{Area}(D)^{(1+\frac{\gamma^2}{4})(p-1)}\hat{\mathbb{E}}^*_{\gamma,0} [\mu^\gamma_{\D}\left (\D\right )^{p-1}],\]
	and the claim now follows from Lemma \ref{lem::centre_z_pmoment}. 
\end{proofof}

\subsection{A random walk associated to the root}

Recall from Section \ref{sec::com}, that under the conditional law $\hat{\mathbb{P}}^*(\cdot | Z)$ if we set 
\begin{equation}
\label{eqn::def_sn}
S_n= S_n(Z):= -2n+4\log\CR^{-1}(Z,D\setminus A_n)
\end{equation}
then $S_n(Z)-S_0(Z)$ is a simple random walk, whose distribution does not actually depend on the point $Z$, and whose increments have mean zero and variance equal to 2 \cite[Lemma 6.1, Remark 6.3]{APS}. Additionally note that $S_{n+1}-S_n$ is always greater than $-2$, because the conformal radius is strictly decreasing in $n$. One can also extract easily from this proof that $(S_n-S_0)$ has exponential moments. 

Write $E_\eta(n,z)=\{S_k(z)\ge -2\eta\, , \, 0\le k \le n\}$. We will later need the following lemma controlling the exponential moments of the conditioned walk:

\begin{lemma}\label{lem::control_exp_big}
	Fix $C>1$ and a deterministic sequence $C_n\to C$. Then there exists $c(C)>0$ and $n_0(C)>0$ such that for any $p,n\ge n_0$ 
	\begin{equation}
	\hat{ \mathbb E}^*\left(\left. \e^{\frac{C_nS_n(Z)}{2\sqrt{n}}}\I{\frac{S_n(Z)}{\sqrt{n}}\ge p} \1_{E_\eta(n,Z)}\right| Z\right)\le c\frac{1+(2\eta+S_0(Z)/\sqrt{n})^+}{\sqrt{n}} \e^{-\frac{p}{4}}\e^{\frac{CS_0(Z)}{\sqrt{n}}}
	\end{equation}
\end{lemma}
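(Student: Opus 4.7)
Conditionally on $Z$, Remark~6.3 of \cite{APS} tells us that $(S_k - S_0(Z))_{k\ge 0}$ is a centered random walk with variance $2$ per step whose increments possess uniform exponential moments, so the statement reduces to a pure random-walk estimate for the walk started at $S_0(Z)$.

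My plan combines three standard ingredients. First, apply the elementary bound $\I{S_n/\sqrt{n}\ge p}\le \e^{(S_n/\sqrt{n}-p)/4}$ inside the integrand; this extracts a factor $\e^{-p/4}$ and replaces the indicator by an exponential weight, reducing the problem to bounding $\hat{\mathbb{E}}^*[\e^{\lambda_n S_n}\1_{E_\eta(n,Z)}\mid Z]$ with $\lambda_n:=(C_n/2+1/4)/\sqrt{n}$. Second, perform a Cram\'er--Esscher tilt of the walk with parameter $\lambda_n$: by the existence of uniform exponential moments, a direct Taylor expansion gives $M(\lambda_n)^n=\e^{O(1)}$, and under the tilted law $\widetilde{\mathbb P}$ the increments acquire positive drift $\mu_n = 2\lambda_n + O(1/n) = (C_n+1/2)/\sqrt{n} + o(n^{-1/2})$ and the same limiting variance $2$. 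Factoring out $\e^{\lambda_n S_0(Z)}\le \e^{CS_0(Z)/\sqrt{n}}$ (valid for $n$ large since $C>C_n/2+1/4$ eventually) produces the exponential factor on the right-hand side of the claim and reduces the problem to bounding the stay-above-barrier probability $\widetilde{\mathbb P}(\min_{k\le n}S_k\ge -2\eta\mid Z)$ for the tilted walk.

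The third step is a ballot-theorem-type estimate: for a random walk with positive drift $\mu_n = \Theta(1/\sqrt{n})$, variance $\Theta(1)$ and uniform exponential moments, starting at distance $a\ge 0$ above a barrier, the probability of staying above the barrier up to time $n$ is bounded by a universal constant times $(1+a/\sqrt{n})/\sqrt{n}$. Applying this with $a=S_0(Z)+2\eta$ yields precisely the factor $c(1+(2\eta+S_0(Z)/\sqrt{n})^+)/\sqrt{n}$ of the claim, and assembling the three contributions concludes the proof.

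The main obstacle is this last estimate: the naive centered-walk ballot theorem (Sparre Andersen) only gives the weaker factor $(1+(S_0+2\eta)^+)/\sqrt{n}$, which is too large by a factor of $\sqrt{n}$ when $S_0$ is of order $\sqrt{n}$. Getting the sharper form stated requires carefully exploiting the $\Theta(1)$ total drift $\mu_n\sqrt{n}$ of the tilted walk over its $n$ steps, and more concretely trading the Gaussian tail $\e^{-(p-S_0/\sqrt{n}-C)^2/4}$ arising when one completes the square in the tilt against the factors $\e^{-p/4}\e^{CS_0/\sqrt{n}}$ on the right-hand side; it is exactly this trade that forces both the restriction $p, n \ge n_0(C)$ and the hypothesis $C>1$.
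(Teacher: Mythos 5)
Your overall strategy (Chernoff bound on the indicator, then a Cram\'er--Esscher tilt, then a barrier estimate for the tilted walk) is a genuinely different route from the paper's: the paper proves a purely random-walk lemma by introducing the exit time $\tau$ of a strip $[-a, \sqrt{n}p/(2(C_n+1))]$, uses optional stopping to bound $\mathbb{P}(X_\tau$ large, $\tau\le n)$ by $(a+1)O(1/(\sqrt{n}p))$, and then applies the strong Markov property at $\tau$ together with a one-step exponential-moment bound, extracting the two factors $(a+1)/\sqrt{n}$ and $e^{-p/4}$ in a single argument. Your decomposition into ``tilt, then ballot'' is conceptually clean and could in principle be made to work, so the high-level plan has merit.

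However, the argument as written has a genuine gap in its third step. The barrier estimate you assert --- that a walk of variance $\Theta(1)$ and drift $\Theta(1/\sqrt{n})$, started at height $a\ge 0$ above a barrier, stays above it up to time $n$ with probability $\lesssim (1+a/\sqrt n)/\sqrt n$ --- is simply false. Take Brownian motion with drift $c/\sqrt{n}$ and unit variance, started at $a=b\sqrt{n}$: the stay-above probability at time $n$ is $\Phi(b+c)-e^{-2bc}\Phi(c-b)$, a strictly positive constant, whereas your claimed bound $(1+b)/\sqrt{n}\to 0$. Indeed, the tilt you perform makes the drift point \emph{away} from the barrier, which can only \emph{increase} the stay-above probability relative to the centered walk; there is no mechanism by which the $\Theta(1)$ total drift could buy you the extra $1/\sqrt{n}$ you are after, and the ``trading of Gaussian tails'' described in your last paragraph cannot rescue this.

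The good news is that you are chasing a stronger statement than is actually needed. The display in the lemma statement contains a typographical slip: the numerator should read $1+(2\eta+S_0(Z))^+$, not $1+(2\eta+S_0(Z)/\sqrt{n})^+$; this is how the lemma is applied in the proof of Lemma~\ref{lem::main1}, and it is exactly the $(a+1)$ factor produced by the paper's general random-walk bound~\eqref{A.2} with $a=(2\eta+S_0(Z))^+$. With that correction, the barrier estimate you need for the tilted walk is precisely the one you yourself attribute to Sparre Andersen, $\lesssim (1+a)/\sqrt{n}$, which \emph{does} hold uniformly over tilts of size $O(1/\sqrt{n})$ (again by comparison with drifted Brownian motion, or by the paper's optional-stopping argument). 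Combined with $M(\lambda_n)^n=e^{O(1)}$ and $e^{\lambda_n S_0(Z)}\le e^{CS_0(Z)/\sqrt{n}}$ for $n$ large (valid since $S_0(Z)\ge 0$ and $C_n/2+1/4\to C/2+1/4<C$), your tilt-then-ballot approach then recovers the paper's bound. So: the obstacle you identified is illusory, the ``sharper form'' you try to prove to overcome it is false, and the proof becomes correct once you target the intended $(1+(2\eta+S_0(Z))^+)$ factor --- but you should still supply a proof of the (standard but not one-line) ballot estimate for the family of tilted walks, which you currently only cite.
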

This lemma is a direct consequence of a more general lemma, by checking that all the conditions hold for our random walk $S_n$. This is the analogue of \cite[Lemma A.2]{Madaule}, but we give a different proof.
\begin{lemma}
	Let $X_n=\sum_{k=1}^n y_k$ be a random walk with increments of mean 0 and variance $\sigma^2$ such that $\P(y_k<-1)=0$ and $\E{ e^{\epsilon y_1} }<\infty$ for some $\epsilon>0$. Then, for all $C$ and $C_n\to C$, there exists $c=c(C)$ and $n_0(C)>0$ such that for any $n>n_0,p>1,a>0$
	\begin{equation}\label{A.2}
	\E{e^{\frac{C_nX_n}{\sqrt{n}}}\I{\frac{X_n}{\sqrt{n}}\geq p}\I{\inf_{k\leq n} X_k\geq -a} }\leq (a+1)c \frac{e^{-p/4}}{\sqrt n} 
	\end{equation}
\end{lemma}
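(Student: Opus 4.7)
The plan is to decompose the range of $X_n/\sqrt n$ into unit intervals, apply a ballot-type estimate with Gaussian tail on each piece, and sum. Setting $I_j=[(p+j)\sqrt n,(p+j+1)\sqrt n)$ for $j\ge 0$, on $\{X_n\in I_j\}$ we have $e^{C_nX_n/\sqrt n}\le e^{C_n(p+j+1)}$, so the left-hand side of \eqref{A.2} is at most
\[
\sum_{j\ge 0} e^{C_n(p+j+1)}\,\P\!\left(X_n\in I_j,\ \inf_{k\le n} X_k\ge -a\right).
\]

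The key input I will use is the combined ballot/Gaussian estimate
\[
\P\!\left(X_n\in[u\sqrt n,(u+1)\sqrt n),\ \inf_{k\le n} X_k\ge -a\right)\le c_0\,\frac{(a+1)(u+1)}{\sqrt n}\,e^{-u^2/(2\sigma^2)},
\]
valid for every $u\ge 0$, $a\ge 0$ and $n$ large. This combines the classical ballot theorem (which, after integrating the local limit density over an interval of length $\sqrt n$, produces the prefactor $(a+1)(u+1)/\sqrt n$) with the Gaussian concentration of $X_n$ at scale $\sigma\sqrt n$. Under our hypotheses it can be obtained by splitting $[0,n]$ into $[0,n/2]$ and $[n/2,n]$: on the first half apply the ballot/local-limit theorem for a walk conditioned to stay above $-a$, and on the second half transport the resulting density of $X_{n/2}$ by the Gaussian local limit theorem for $X_n-X_{n/2}$. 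The assumptions $\P(y_k<-1)=0$ and the exponential integrability of $y_1$ ensure the estimate is valid uniformly over the range of $u$ we need.

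Substituting the estimate and relabelling $u=p+j$, the left-hand side of \eqref{A.2} is bounded by
\[
\frac{c_0\,e^{C_n}(a+1)}{\sqrt n}\sum_{u\in\{p,p+1,\ldots\}}(u+1)\,e^{C_n u - u^2/(2\sigma^2)}.
\]
Since $C_n\to C$, for $n\ge n_0(C)$ the exponent $(C_n+1/4)u-u^2/(2\sigma^2)$ is uniformly bounded above on $u\ge 0$, so there exists $K=K(C,\sigma)$ with $(u+1)\,e^{C_n u - u^2/(2\sigma^2)}\le K\,e^{-u/4}$ for all such $u$. Summing the geometric series gives $\sum_{u\ge p} K e^{-u/4}\le c'\,e^{-p/4}$, which, combined with the previous display, yields the announced bound with some $c=c(C)$.

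The main obstacle is establishing the combined ballot/Gaussian estimate cleanly: both the barrier factor $(a+1)/\sqrt n$ (from the rarity of $\inf_k X_k\ge -a$) and the Gaussian tail $e^{-u^2/(2\sigma^2)}$ (from the concentration of the endpoint) must appear simultaneously, which requires coupling a renewal analysis near the barrier with a Gaussian local limit bound. The time-splitting decomposition handles this under the exponential moment assumption, but some care is needed to control the density of $X_{n/2}$ uniformly in $u$. An alternative route is to tilt the increments by $e^{\lambda y-\phi(\lambda)}$ with $\lambda\asymp n^{-1/2}$, thereby absorbing the weight $e^{C_nX_n/\sqrt n}$ into the measure, and then apply a ballot theorem for a walk with a small positive drift in the spirit of the Caravenna--Chaumont framework.
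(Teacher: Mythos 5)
Your approach is genuinely different from the paper's: you reduce everything to a combined ballot/local-limit estimate and sum over $\sqrt n$-scale windows, whereas the paper works directly with a stopping time $\tau$ (first exit of $[-a,\sqrt n p/(2(C_n+1))]$), an optional-stopping argument that yields $\P(X_\tau>\sqrt n p/(2(C_n+1)),\tau<n)\lesssim (a+1)/(p\sqrt n)$, and a case split on whether the overshoot increment $y_\tau$ is large or small (handled by Cauchy--Schwarz and the $\epsilon$-exponential moment). That argument is elementary, self-contained, and avoids local limit theorems entirely.

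The gap in your write-up is exactly the piece you flag as ``the main obstacle'': the inequality $\P(X_n\in[u\sqrt n,(u+1)\sqrt n),\,\inf_k X_k\ge -a)\le c_0(a+1)(u+1)n^{-1/2}e^{-u^2/(2\sigma^2)}$ is not proved, and as stated it is actually false for large $u$. Under the sole assumption $\E{e^{\epsilon y_1}}<\infty$ for \emph{some} $\epsilon>0$, the Chernoff/Cram\'er upper bound for $\P(X_n\ge u\sqrt n)$ is Gaussian only while the optimal tilt $\lambda^\star\approx u/(\sigma^2\sqrt n)$ stays below $\epsilon$; once $u\gtrsim\sigma^2\epsilon\sqrt n$ the tail of $X_n$ is only exponential, not Gaussian, so the factor $e^{-u^2/(2\sigma^2)}$ overstates the decay. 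Your half-interval decomposition will likewise hit this wall on the bridge $X_n-X_{n/2}$, where the Gaussian LLT only applies up to moderate deviations. The conclusion can still be rescued (for $u\gtrsim\sqrt n$ the cruder exponential Chernoff bound beats $e^{C_nu}$ once $n$ is large, and for $p\gtrsim\log n$ or $a\gtrsim\sqrt n$ the whole lemma follows from the one-line bound $\E{e^{C_nX_n/\sqrt n}\I{X_n/\sqrt n\ge p}}\le \tilde c\,e^{-p}$), but none of this repair appears in the proposal. As written, the central estimate is both unproved and, in its stated range of validity, incorrect, so the argument is incomplete.
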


\begin{proof}
	First note that for any $m\leq n$ and $n\geq (C_n+1)/\epsilon$  we have that
	\begin{linenomath}	
		\begin{align}\label{eqn::walk1}
		\E{e^{\frac{C_nX_m}{\sqrt{n}}}\I{\frac{X_m}{\sqrt{n}}\geq p} } &\leq e^{-p} \E{e^{\frac{(C_n+1)X_m}{\sqrt{n}}}}=e^{-p}\left( \E{  \exp\left (\frac{(C_n+1)}{\sqrt{n}}X_1\right)} \right)^m\nonumber\\ 
		&=\exp\left(\frac{m}{n} ((c+1)^2\sigma^2/2+o(n^{-1/2}))-p\right)<\tilde c(C) e^{-p},
		\end{align}
	\end{linenomath}
	where we have used analyticity of the Laplace transform near $0$ in the second line.
	Now define $\tau$ to be the first time $X_n$ exits $[-a,\sqrt{n}p/(2(C_n+1))]$. We can use the strong Markov property and the fact that $X_{\tau\wedge n}\geq -(a+1)$, to conclude that
	\begin{linenomath}
		\begin{align*}
		(a+1)\geq \E{X_{\tau\wedge n} \I{X_{\tau}>\sqrt{n}p/(2(C_n+1)),\tau\leq n}}\geq \frac{\sqrt{n}p}{2(C_n+1)}\P( X_{\tau}>\sqrt{n}p/(2(C_n+1)),\tau<n)
		\end{align*}
	\end{linenomath}
	and thus, 
	\begin{equation}\label{eqn::walk2}
	\P( X_{\tau}>\sqrt{n}p/2(C_n+1),\tau<n) = p^{-1}(a+1)O(1/\sqrt n).
	\end{equation} 
	Using \eqref{eqn::walk1} we can bound the left hand-side of \eqref{A.2} by
	\begin{linenomath}
		\begin{align*}
		&\E{e^{\frac{C_nX_{\tau}}{\sqrt{n}}}\I{X_\tau>-a,\tau\leq n}\E{e^{\frac{C_n(X_n-X_\tau)}{\sqrt{n}}}\I{\frac{X_m-X_\tau}{\sqrt{n}}\geq p-\frac{X_\tau}{\sqrt{n}}}  \mid \F_\tau}}\leq \tilde c e^{-p} \E{e^{\frac{(C_n+1)X_\tau}{\sqrt{n}}}\I{X_\tau>-a,\tau\leq n} }.
		\end{align*}
	\end{linenomath}
	
	Finally, observe that $(C_n+1)X_{\tau-1}/\sqrt{n}$ is smaller than or equal to $p/2$. Thus by separating the cases of $y_\tau$ greater or less than $ \sqrt{n}p/4(C_n+1)$, we can further bound the right hand side by
	\[ce^{-p/4}\P(X_\tau>\sqrt{np}/2,\tau\leq n)+\tilde ce^{-p/2}\E{e^{\frac{(C_n+1)y_\tau}{\sqrt{n}}} \I{y_\tau\geq \sqrt{n}p/4(C_n+1) }}\]
	By \eqref{eqn::walk2}, the first term is bounded by $(a+1)ce^{-p/4}(\sqrt{n}p)^{-1}$. Moreover, by Cauchy-Schwarz and the assumption of exponential moments, the second term is smaller than $c e^{-p/2}ne^{-\epsilon \sqrt{n}p/2(C_n+1)}$. Thus, the lemma follows.
	
\end{proof}

\subsection{First passage set seen from the root}

For a later technical argument we will also need to have some control on the geometry of first passage sets with respect to the marked point $Z$ under the rooted measures $\hat{ \mathbb P}^*_\gamma$. The following lemma comes from \cite{Ai}. 

\begin{lemma}\label{lem::ZbigCR} Set $D=\D$. There exist $c,c'>0$ such that if $D_n(Z)$ is the component of $\D \setminus A_n$ containing $Z$, we have 
	\[ \phs_\gamma\left(\frac{\text{Area}(D_n(Z))}{\CR(Z,D_n(Z))^{2}}\ge K\right) \leq ce^{-c'n}+K^{-c'} \]
	for all $\gamma\in (1,2)$ and $K\ge 0$.
\end{lemma}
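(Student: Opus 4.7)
The plan is to bound $\phs_\gamma(\mathrm{Area}(D_n(Z))/\CR(Z,D_n(Z))^2\ge K)$ by splitting the event: the piece where the random walk $S^\gamma_n$ has made an atypically large upward deviation will produce the $ce^{-c'n}$ term, while the complementary piece — analysed by conditioning on $A_n$ — will produce the $K^{-c'}$ term.

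For the first piece, under $\phs_\gamma(\cdot\mid Z)$ the walk $S^\gamma_n(Z)$ has mean zero, increments bounded below by $-\gamma$, and finite exponential moments (by the discussion at the end of Section~\ref{sec::com} together with \cite[Remark~6.3]{APS}). Cram\'er's theorem then gives $\phs_\gamma(S^\gamma_n\ge \alpha n)\le c_\alpha e^{-c'_\alpha n}$ for any $\alpha>0$, and since $\log\CR^{-1}(Z,D_n(Z))=n/\gamma+S^\gamma_n/\gamma^2$, this translates to $\phs_\gamma(\CR(Z,D_n(Z))\le e^{-\beta n})\le ce^{-c'n}$ for any fixed $\beta>1/\gamma$.

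For the second piece, I would condition on $A_n$: by \eqref{eqn::def_rooted_meas} the density of $Z$ given $A_n$ is proportional to $\CR(z,D_n(z))^{\gamma^2/2}$, and hence
\[\phs_\gamma\!\left(\frac{\mathrm{Area}(D_n(Z))}{\CR(Z,D_n(Z))^2}\ge K\right)
=\frac{1}{M_0^\gamma(\D)}\,\E{\int_{\D\setminus A_n}\!\!\mathbf 1\{\mathrm{Area}(D_n(z))\ge K\CR(z,D_n(z))^2\}\,e^{\gamma n}\CR(z,D_n(z))^{\gamma^2/2}\,dz}.\]
On the indicator's support one has $\CR(z,D_n(z))^{\gamma^2/2}\le (\pi/K)^{\gamma^2/4}$ (using $\mathrm{Area}(D_n(z))\le\pi$, since $D_n(z)\subset\D$). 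Combining this with a Koebe/dyadic-layer decomposition of $\{z:\CR(z,D_n(z))\le r\}$ and the martingale identity $\E{M_n^{\gamma'}(\D)}=M_0^{\gamma'}(\D)$ for a suitably chosen $\gamma'<\gamma$ produces the polynomial bound $K^{-c'}$ for some universal $c'>0$.

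The main obstacle lies in the second piece: the naive indicator bound alone yields a spurious factor $e^{\gamma n}$ after integration. The trick — and the reason \cite{Ai} is invoked — is to retain a factor $\CR^{(\gamma')^2/2}$ in the integrand with $\gamma'<\gamma$ (sacrificing part of the exponent in $K^{-c'}$), and to cover the residual mass where $\CR(Z,D_n(Z))$ is extremely small using the Cram\'er bound from the first step. Choosing $\gamma'$ close enough to $\gamma$ so that the resulting tail exponent is still positive, and picking $\beta$ correspondingly, one obtains the two terms with the same constant $c'$ as stated.
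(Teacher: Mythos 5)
The paper's own ``proof'' of Lemma~\ref{lem::ZbigCR} is a single-line citation to \cite[Lemma~2.3(iii)]{Ai}, together with the remark that uniformity of $c,c'$ over $\gamma\in(1,2)$ can be read off from that proof. Your attempt, by contrast, is a from-scratch argument, and it is worth assessing on its own merits.

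The first piece is sound in spirit: under $\hat{\mathbb P}^*_\gamma(\cdot\mid Z)$ the walk $S^\gamma_n-S^\gamma_0$ is centred with bounded-below increments and exponential moments, and the identity $\log\CR^{-1}(Z,D_n(Z))=n/\gamma+S^\gamma_n/\gamma^2$ converts a Cram\'er upper-tail estimate for $S^\gamma_n$ into $\hat{\mathbb P}^*_\gamma(\CR(Z,D_n(Z))\le e^{-\beta n})\le ce^{-c'n}$ for $\beta>1/\gamma$. (You should still convince yourself that the Cram\'er constants can be taken uniform over $\gamma\in(1,2)$, since both the increment law and the required deviation rate $\gamma^2\beta-\gamma$ depend on $\gamma$; this is checkable but not automatic.)

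The second piece, however, has a genuine gap. After conditioning on $A_n$ and writing $\CR^{\gamma^2/2}=\CR^{(\gamma')^2/2}\cdot\CR^{(\gamma^2-(\gamma')^2)/2}$ with $\gamma'<\gamma$, the indicator support gives $\CR^{(\gamma^2-(\gamma')^2)/2}\le(\pi/K)^{(\gamma^2-(\gamma')^2)/4}$ and the martingale identity gives $\mathbb E\bigl[\int e^{\gamma'n}\CR^{(\gamma')^2/2}\,dz\bigr]=M_0^{\gamma'}(\D)$, but you are left with a stray factor $e^{(\gamma-\gamma')n}$ from $e^{\gamma n}=e^{\gamma' n}e^{(\gamma-\gamma')n}$. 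You propose to absorb this using the constraint $\CR\ge e^{-\beta n}$, but this constraint only gives a \emph{lower} bound on $\CR$, which bounds $\CR^s$ from below (for $s>0$) and therefore cannot be used to dominate the growing factor $e^{(\gamma-\gamma')n}$; trying $\gamma'>\gamma$ instead forces $\beta\le 2/(\gamma+\gamma')$, which contradicts the Cram\'er requirement $\beta>1/\gamma$. Making $\gamma'\to\gamma$ kills the exponential factor but also kills the $K$-exponent $(\gamma^2-(\gamma')^2)/4$, so no fixed $c'>0$ emerges. The ``Koebe/dyadic-layer decomposition'' is not spelled out enough to see how it would rescue this. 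In short, the combination of a martingale change-of-exponent and the constraint $\CR\ge e^{-\beta n}$ does not close: some genuine geometric/stationarity input about the law of the component $D_n(Z)$ around the rooted point (which is precisely what the cited \cite[Lemma~2.3(iii)]{Ai} supplies, via the iterative and conformally-invariant structure of the exploration) is still needed, and the proposal does not provide a substitute for it.
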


\begin{proof}
	This statement is part of \cite[Lemma 2.3(iii)]{Ai}. Uniformity of $c,c'$ in $\gamma\in (1,2)$ is not explicitly stated in this lemma, but it comes directly from the proof.
\end{proof}

\section{Proof of Theorem \ref{thm::conv_derivative}}\label{sec::proof}

By conformal invariance of the Gaussian free field, we may take $D=\D$. As mentioned in the introduction, the proof follows closely the strategy in \cite[Proof of Theorem 1.1]{Madaule}. However, the presentation is self-contained and some technical details differ. 

By a standard argument (e.g. see \cite[Remark 4.3]{APS}), Theorem \ref{thm::conv_derivative} follows, once we show that for every $\Os\subseteq \D$
\[\frac{\mu^{\gamma}(\Os)}{2-\gamma}\to 2\mu'(\Os)\] in probability, as $\gamma\nearrow 2$.

This in turn follows from a diagonal argument: we will define below an approximation speed $n(C,\gamma)$ such that on the one hand the level $n$ approximations of $\mu^\gamma$ converge to $2\mu'$, and on the other hand the error of the approximations w.r.t $\mu^\gamma$ go to zero. These steps are separated into two lemmas:

\begin{lemma} \label{lem::main1}
	For any $\Os\subseteq \D$ 
	\[	\lim_{C\to \infty} \lim_{\gamma\to 2^{-}}\frac{M_{\gcn}^\gamma(\Os)}{2-\gamma} = 2\mu'(\Os)\]
	where the limit is in $\mathbb{P}$-probability. 
\end{lemma}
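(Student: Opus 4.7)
My plan is to use an exact change-of-variables identity rewriting $M_n^\gamma$ as a density against the critical approximation $M_n^{\gamma=2}$, and then to apply the Seneta--Heyde convergence from Theorem \ref{thm::critical_cascades} together with a Brownian--meander limit for the walk $S_n$. Starting from $M_n^\gamma(dz)/M_n^{\gamma=2}(dz)=e^{(\gamma-2)n}\CR(z,D\setminus A_n)^{(\gamma^2-4)/2}$ and using the identity $4\log\CR^{-1}(z,D\setminus A_n)=S_n(z)+2n$, a direct simplification yields
\[
M_n^\gamma(dz)=\exp\!\left(-\tfrac{n(2-\gamma)^2}{4}+\tfrac{(4-\gamma^2)S_n(z)}{8}\right)M_n^{\gamma=2}(dz).
\]
The natural scaling is $n(C,\gamma):=\lfloor C^2/(2-\gamma)^2\rfloor$, so that $(2-\gamma)\sqrt n\to C$ and $n(2-\gamma)^2\to C^2$ as $\gamma\to 2^-$. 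Using $1/(2-\gamma)=\sqrt n/C+o(\sqrt n)$, the quantity under study becomes
\[
\frac{M^\gamma_{n(C,\gamma)}(\Os)}{2-\gamma}=\frac{e^{-C^2/4+o(1)}}{C}\int_\Os \exp\!\Bigl((C/2+o(1))\tfrac{S_n(z)}{\sqrt n}\Bigr)\sqrt n\,M_n^{\gamma=2}(dz).
\]

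By Theorem \ref{thm::critical_cascades}, $\sqrt n M_n^{\gamma=2}\to(2/\sqrt\pi)\mu'$ in probability, so everything reduces to identifying the weighted limit of the integral. The heuristic input is that, under the rooted measure $\hat{\mathbb P}^*$ and conditionally on the barrier event $E_\eta(n,Z)$, the walk $S_n(Z)$ is a centred random walk with step variance $2$; a Donsker/meander argument then gives that $S_n(Z)/\sqrt n$ converges in law to $\sqrt 2\,\mathfrak m_1$, where $\mathfrak m_1$ has the standard Brownian--meander endpoint density $re^{-r^2/2}\mathbf 1_{r>0}$, so the rescaled density is $(r/2)e^{-r^2/4}\mathbf 1_{r>0}$ (a Rayleigh$(\sqrt 2)$). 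The Gaussian computation
\[
\int_0^\infty e^{Cr/2}\tfrac{r}{2}e^{-r^2/4}\,dr=e^{C^2/4}\!\int_0^\infty \tfrac{r}{2}e^{-(r-C)^2/4}dr\ \sim\ \sqrt\pi\,C\,e^{C^2/4}\quad(C\to\infty),
\]
combined with the Seneta--Heyde prefactor $2/\sqrt\pi$, produces $(1/C)e^{-C^2/4}\cdot(2/\sqrt\pi)\cdot\sqrt\pi\,C\,e^{C^2/4}=2$, matching the desired limit $2\mu'(\Os)$.

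To turn this into a proof I would insert the truncation $\mathbf 1_{E_\eta(n,z)}$ into the integral, establish the convergence of the truncated expression for fixed $C$ and $\eta$ (using Theorem \ref{thm::critical_cascades} and the meander limit via the rooted/unrooted dictionary of Section \ref{sec::com}), and then send $\eta\to\infty$ followed by $C\to\infty$. The main obstacle is controlling the complementary contribution $\mathbf 1_{E_\eta(n,z)^c}$ uniformly in $\gamma$ close to $2$: the moment bound of Lemma \ref{lem::pmoment} with $p=1+(2-\gamma)/2$ transfers the task (via the definition of $\hat{\mathbb P}^*_\gamma$) to estimating an expectation of the shape $\hat{\mathbb E}^*_\gamma[\exp((C/2)S_n(Z)/\sqrt n)\mathbf 1_{E_\eta(n,Z)^c}]$, which is precisely what Lemma \ref{lem::control_exp_big} is designed to bound, with Lemma \ref{lem::ZbigCR} providing the geometric control needed to pass between the rooted and unrooted viewpoints. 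This combination of meander asymptotics, uniform GMC moment control and exponential random-walk estimates is the FPS analogue of Madaule's cascade argument, and the most delicate point is the clean interchange of the three limits $\gamma\to 2^-$, $\eta\to\infty$ and $C\to\infty$.
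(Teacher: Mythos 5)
Your algebraic rewriting is correct: the tilting identity
$M_n^\gamma(dz)=\exp(-n(2-\gamma)^2/4+(4-\gamma^2)S_n(z)/8)M_n^{\gamma=2}(dz)$,
the choice $n=\lfloor C^2/(2-\gamma)^2\rfloor$, and the final Gaussian/meander computation that produces the factor $2$ all check out and are the same computations that drive the paper's argument. However, there are two genuine gaps in the way you propose to close the argument, and one minor misattribution of ingredients.

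The central missing ingredient is \cite[Theorem 6.7]{APS}, which the paper uses as its main input. Your plan is to ``apply'' Theorem~\ref{thm::critical_cascades} together with a ``Donsker/meander argument'' and the rooted/unrooted dictionary; but Theorem~\ref{thm::critical_cascades} only gives weak convergence of $\sqrt{n}\,M_n^{\gamma=2}$ against \emph{fixed} test functions, whereas here the weight $\exp(\frac{C}{2}(1+o(1))S_n(z)/\sqrt n)$ depends on $z$ and on $n$ and has no pointwise limit. Passing from convergence in law of $S_n(Z)/\sqrt n$ under $\hat{\mathbb P}^*$ to convergence of the unrooted weighted integral $\frac{\sqrt n}{D_n(\D)}\int_\D e^{2n-2l(z,n)}F(S_n(z)/\sqrt n)\,dz$ is a genuine quenched law-of-large-numbers statement, and it is precisely the content of \cite[Theorem 6.7]{APS}; without it the proof does not close. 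Second, your truncation scheme targets the wrong issue: inserting $\mathbf 1_{E_\eta(n,z)}$ bounds the walk from \emph{below} and does nothing to make the unbounded test function $x\mapsto e^{Cx/2}$ admissible. The paper instead truncates by $\mathbf 1\{S_n(z)/\sqrt n\le p\}$ to obtain a bounded $F$ for \cite[Theorem 6.7]{APS}, and then bounds the complementary contribution $\mathbf 1\{S_n/\sqrt n>p\}$ by restricting to the barrier event $C_\eta$, passing to the rooted measure $\hat{\mathbb P}^*$ via Markov's inequality, and invoking Lemma~\ref{lem::control_exp_big}. Finally, neither Lemma~\ref{lem::pmoment} nor Lemma~\ref{lem::ZbigCR} plays any role in the proof of this lemma --- both enter only in the proof of Lemma~\ref{lem::main2} --- so citing them here points the argument in the wrong direction; the transfer to the rooted expectation is done by the definition \eqref{eqn::def_rooted_meas} and Markov's inequality alone, not by any $p$-th moment bound.
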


\begin{lemma}\label{lem::main2}
	For any $\Os \subseteq \D$ and $\eps>0$
	\[\limsup_{C\to \infty} \limsup_{\gamma\to 2^{-}} \mathbb{P} \left( \left|\frac{M^{\gamma}_{\gcn}(\Os) - \mu^{\gamma}(\Os)}{2-\gamma}\right|>\eps\right) = 0\]
\end{lemma}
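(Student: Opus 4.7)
The plan is to bound $\E{|\mu^\gamma(\Os) - M_n^\gamma(\Os)|^p}$ as a function of $n$ for $p=p(\gamma)=1+(2-\gamma)/2\in (1,2)$ using Lemma \ref{lem::pmoment}, then to set $n=\gcn$ and apply Markov's inequality. By Theorem \ref{thm::subcritical_cascades}, conditionally on $A_n$ one has $\mu^\gamma(\Os)=\e^{\gamma n}\sum_{D'\in\mathcal{A}_n}\tilde\mu^\gamma_{D'}(\Os\cap D')$ with the $\tilde\mu^\gamma_{D'}$ conditionally independent, and $M_n^\gamma(\Os)=\E{\mu^\gamma(\Os)\mid A_n}$. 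Hence the difference is a sum of conditionally independent centered random variables, and the von Bahr--Esseen inequality (valid for $p\in[1,2]$), applied conditionally on $A_n$, reduces the bound to a constant times $\sum_{D'}\E{(\e^{\gamma n}\tilde\mu^\gamma_{D'}(\Os\cap D'))^p\mid A_n}$.

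I then apply Lemma \ref{lem::pmoment} to each summand (after conformally mapping each $D'$ to a subdomain of $\D$) with $f=\1_{\Os\cap D'}$. Summing and switching to the rooted probability measure $\hat{\mathbb{P}}^*_\gamma$ from \eqref{eqn::def_rooted_meas} yields
$$\E{|\mu^\gamma(\Os)-M_n^\gamma(\Os)|^p}\leq C\,\e^{(p-1)\gamma n}\,\hat{\mathbb{E}}^*_\gamma\!\left[\text{Area}(D_n(Z))^{(p-1)(1+\gamma^2/4)}\1_{\{Z\in\Os\}}\right],$$
where $D_n(Z)$ is the component of $\D\setminus A_n$ containing $Z$. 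Using Lemma \ref{lem::ZbigCR}, I replace $\text{Area}(D_n(Z))$ by a constant multiple of $\CR(Z,D_n(Z))^2$, controlling the rare event where this fails using the trivial bound $\text{Area}(D_n(Z))\leq \pi$ together with the tail estimate of Lemma \ref{lem::ZbigCR} itself.

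The random walk $S_n^\gamma$ from Section \ref{sec::com} (mean zero, variance $\gamma$, with exponential moments under $\hat{\mathbb{P}}^*_\gamma(\cdot\mid Z)$) lets me write $\CR(Z,D_n(Z))=\exp(-n/\gamma - S_n^\gamma/\gamma^2)$. A direct moment-generating-function computation at $\lambda=2(p-1)(1+\gamma^2/4)/\gamma^2=O(2-\gamma)$ gives $\hat{\mathbb{E}}^*_\gamma[\e^{-\lambda S_n^\gamma}]=\e^{n\gamma\lambda^2/2+O((2-\gamma)^3 n)}$. Combined with the deterministic exponent $(p-1)\gamma n - 2(p-1)(1+\gamma^2/4)n/\gamma=-(p-1)(2-\gamma)(2+\gamma)n/(2\gamma)$, everything collapses to a bound of the form $\E{|\mu^\gamma(\Os)-M_n^\gamma(\Os)|^p}\lesssim \e^{-c(2-\gamma)^2 n}$ for some $c>0$. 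Markov's inequality then bounds the probability of interest by $C\eps^{-p}(2-\gamma)^{-p}\e^{-c(2-\gamma)^2\gcn}$.

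The main obstacle is to extract enough decay to beat the $(2-\gamma)^{-p}\sim (2-\gamma)^{-1}$ factor from Markov: one needs $(2-\gamma)^2\gcn$ to grow strictly faster than $\log(1/(2-\gamma))$ as $\gamma\to 2^-$. For the naive scale $\gcn\asymp C/(2-\gamma)^2$ suggested by critical-cascade heuristics, this straightforward bound is just barely insufficient. I would expect the actual argument to restrict the whole computation to the event $E_\eta(n,Z)$ of Lemma \ref{lem::control_exp_big} (where $S_k^\gamma\geq -2\eta$ for all $k\leq n$), on which Seneta--Heyde-type heuristics provide an additional $n^{-1/2}$ saving; this extra factor, together with a separate treatment of the contribution from outside $E_\eta$ via Lemma \ref{lem::ZbigCR} and the tail estimate of Lemma \ref{lem::control_exp_big}, should be enough to close the argument in the double limit.
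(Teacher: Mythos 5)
Your setup—von Bahr--Esseen applied conditionally on $A_n$, Lemma~\ref{lem::pmoment} for each component, a switch to the rooted measure, and Lemma~\ref{lem::ZbigCR} to trade $\mathrm{Area}(D_n(Z))$ for $\CR(Z,D_n(Z))^2$—tracks the paper quite closely. The gap is in the final step, and it is a real one: the plan of bounding $\E{|\mu^\gamma(\Os)-M_n^\gamma(\Os)|^p}$ and then applying Markov is structurally doomed. Your own moment-generating-function computation gives (after careful bookkeeping) a bound of order $\e^{-C^2/4}$ for the $p$-th moment at $n=\gcn$, with no $(2-\gamma)$-dependence left over; dividing by $(2-\gamma)^p\sim(2-\gamma)$ then makes the Markov bound diverge as $\gamma\to 2^-$ for every fixed $C$, exactly as you noticed. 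This is not a matter of constants—the unconditional $p$-th moment of $Y_{\gamma,C}:=(2-\gamma)^{-1}|M_n^\gamma(\Os)-\mu^\gamma(\Os)|$ really does blow up (its expectation is comparable to $(2-\gamma)^{-1}\e^{-C^2/4}\E{M_n^{\tilde\gamma}(\D)}$, and $\E{M_n^{\tilde\gamma}(\D)}$ is bounded below), so no refinement of the moment bound alone can close the argument.

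The fix you propose—restricting to $E_\eta(n,Z)$ to harvest an extra $n^{-1/2}$—does not rescue the moment bound either, because the complementary event $E_\eta(n,Z)^c$ is the \emph{dominant} contribution to $\hat{\mathbb{E}}^*_\gamma[\e^{-\lambda S_n^\gamma}]$ when $\lambda\sqrt{n}=O(1)$: under $\hat{\mathbb{P}}^*_\gamma(\cdot\mid Z)$ the walk is centered, so $E_\eta$ has probability only $O(\eta/\sqrt n)$, and the mass of the tilted expectation living on $E_\eta^c$ cannot be discarded in an $L^p$ bound. What the paper does instead is to refrain from taking full expectations. It writes
\[
\P(Y_{\gamma,C}>\eps)\le\P\bigl(\E{Y_{\gamma,C}^p\mid A_n}>\eps^p\delta\bigr)+\delta,
\]
using Markov only conditionally on $A_n$ for the second term, and then shows the \emph{random variable} $\E{Y_{\gamma,C}^p\mid A_n}$ (not its expectation) goes to $0$ in probability in the double limit. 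The crucial observation is that after the von Bahr--Esseen and Lemma~\ref{lem::pmoment} steps, $\E{Y_{\gamma,C}^p\mid A_n}$ is bounded, up to a factor $\e^{-C^2/4(1+o(1))}$, by a constant times $(2-\tilde\gamma)^{-1}M_{n}^{\tilde\gamma}(\D)$ for a modified parameter $\tilde\gamma(\gamma)\to 2$; invoking Lemma~\ref{lem::main1} (already proved) for this $\tilde\gamma$ shows that $(2-\tilde\gamma)^{-1}M_n^{\tilde\gamma}(\D)$ converges in probability to a finite limit, so the extra factor $\e^{-C^2/4}$ drives everything to zero as $C\to\infty$. This self-referential use of Lemma~\ref{lem::main1} inside Lemma~\ref{lem::main2} is the missing ingredient: it converts a divergent $L^1$ bound into a convergent in-probability bound, which is all the statement requires.
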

It turns out that the right choice of $n(C,\gamma)$ is given by
\begin{equation}
\label{eqn::ncgamma}
n(C,\gamma) := \left\lfloor \left(\frac{C}{2-\gamma}\right)^2 \right\rfloor,\end{equation}
and that it is also necessary to include the dependence on the extra parameter $C$. Before going to the proof, let us try to briefly discuss this choice.

Let us first consider $(2-\gamma)^{-1}M_{n}^\gamma(\D)$ to see which choices of approximation level $n=n(\gamma)$ could possibly give us the right limit. Notice that for $\gamma<2$ we can write 
\[\frac{M_n^\gamma(\D)}{2-\gamma} = \frac{1}{2-\gamma}\int_\D \e^{(\gamma-2)n}\CR(z,\D\setminus A_n)^{\frac{\gamma^2}{2}-2} M_n^{2}(dz) \]
%(in fact we will write this slightly differently in the actual proof, but it will boil down to the same thing). 
Now, we know from Theorem \ref{thm::critical_cascades} that we have to multiply the measures $M_n^{2}(dz)$ by $\sqrt{n}$ in order to converge to a multiple of the critical measure. Thus, forgetting about the first terms in the integrand, it seems that in order to obtain a non-trivial limit we should pick $n(\gamma)\propto (2-\gamma)^{-2}$. 

So, let us consider $n(\gamma)=n(C,\gamma)=\lfloor (C/(2-\gamma))^2 \rfloor$ for some $C>0$. In the proof of Lemma \ref{lem::main1}, we will see that as $\gamma\to 2$ the measures $(2-\gamma)^{-1}M_n^\gamma$ converge to $c_1(C)\times \mu'$ for some $C-$dependent constant $c_1(C)$. This hints that for any fixed $C$, the error introduced when approximating $(2-\gamma)^{-1} \mu^\gamma$ by $(2-\gamma)^{-1} M_{n}^\gamma$ with $n=n(\gamma,C)$ does not go to $0$ as $\gamma\to 2$. Taking the extra limit $C \to \infty$ allows us to  control this error.

\begin{remark0}\label{rem::factor2}
	
	Finally, let us comment on the slightly surprising factor of $2$. Essentially the reason is the same as that given in \cite[below Theorem 1.1]{Madaule}, but we explain it here in our context.
	
	We saw in Section \ref{sec::com} that for a typical point $z=Z$ sampled from the measure $\mu^\gamma$ 
	\begin{equation}
	S^\gamma_n(z)= -\gamma n+\gamma^2 \log\CR^{-1}(z,D\setminus A_n)
	\end{equation}
	is a mean zero random walk. Now for any $\gamma \in [0,2]$, we decompose the GMC measure according to the sign of the walk
	\[M_n^\gamma(dz) := M_n^\gamma(dz)\I{S^\gamma_n(z) \geq 0} + M_n^\gamma(dz)\I{S^\gamma_n(z) \leq 0},\]
	and denote the summands respectively by $M_n^{\gamma,+}(dz)$ and $M_n^{\gamma,-}(dz)$. Decompose similarly the derivative martingale
	\[D_n^{\pm}(dz) = -\partial_\gamma|_{\gamma = 2}M_n^{\gamma,\pm}(dz).\] The origin of the factor $2$ can now be explained by the following observations:
	\begin{itemize}
		\item For $\gamma < 2$, both $M_n^{\gamma,+}$ and $M_n^{\gamma,+}$ converge to some non-trivial measures $M^{\gamma,+}$ and $M^{\gamma,-}$as $n \to \infty$. Moreover $M^{\gamma,+}+M^{\gamma,-}=\mu^{\gamma}$.
		\item For the derivative martingale, however, $\lim_{n \to \infty}D_n^{-} = 0$, whereas $\lim_{n \to \infty}D_n^{+} = \mu'$ (see \cite{APS} for an explanation; for the same reason that $M_n^{2}\to 0$ as $n\to \infty$ we see that the limit of $D_n$ is supported only where $S_n$ is large). Thus, $\mu'$ is only the limit of the derivatives of $M_n^+$ as $n \to \infty$.
		\item Finally,  $\mu'$ is the limit of both the derivatives of $M^{\gamma,+}$ and $M^{\gamma,-}$, i.e.
		\[\lim_{\gamma \to 2^-}(2-\gamma)^{-1}M^{\gamma,+} = \lim_{\gamma \to 2^-}(2-\gamma)^{-1}M^{\gamma,-}= \mu'.\]
		Indeed, this follows from a direct calculation, after observing that in the proof of Lemma \ref{lem::main1} the term $\E{\e^{\frac{C}{\sqrt{2}}R_1}}$ will be replaced by \[\E{\e^{\frac{C}{\sqrt{2}}R_1}\I{C/\sqrt{2} \le R_1}} \text{ and by } \E{\e^{\frac{C}{\sqrt{2}}R_1}\I{R_1\le C/\sqrt{2}}},\] when considering $M^{\gamma,+}$ or $M^{\gamma,-}$, respectively.
	\end{itemize}
	In other words, the factor $2$ originates from the fact that taking the limit as $n\to \infty$ and taking the derivative in $\gamma$ do not commute: when one first takes the derivative and then the limit $n \to \infty$, the contribution of $\partial_{\gamma}\mid_{\gamma=2}M^{\gamma,-}$ disappears.
\end{remark0}

\medskip

\begin{proofof}{Lemma \ref{lem::main1}}
	From now on, we work for simplicity in the case $\Os=\D$ and also write $n=\gcn$ to try and keep notations compact. 
	
	Our main input is \cite[Theorem 6.7]{APS}, which says that for any positive, continuous bounded function $F$ on $\D$ 
	\begin{equation}\label{eqn::extended_SH} \frac{\sqrt{n}}{D_n(\D)} \int_{\D} \e^{2n-2l(z,n)} F\left(\frac{S_n(z)}{\sqrt{n}}\right) \, dz \to \sqrt{\frac{4}{\pi}} \E{F(\sqrt{2}R_1)}. 
	\end{equation}
	as $n \to \infty$ in probability. Here $l(z,n):=\log \CR^{-1}(z,\D\setminus A_n)$, $S_n(z):=-2n+4l(z,n)$ and $R_1$ has the law of a Brownian meander at time 1.
	
	We will aim to write $(2-\gamma)^{-1} M_{n}^\gamma$ in a similar form. 
	First, by the definition of $n = n(C,\gamma)$ in \eqref{eqn::ncgamma}, we can write 
	\begin{equation*}
	\frac{M_n^\gamma(\D)}{2-\gamma} = D_n(\D) \times \frac{1}{C} \times \frac{\sqrt{n}(1+o(1))}{D_{n}(\D)} \int_{\D} \e^{2n-2l(z,n)} \e^{(\gamma-2)n - (\frac{\gamma^2}{2}-2)l(z,n)}\, dz	\end{equation*}
	where by $o(1)$ we mean a deterministic function of $\gamma$ (possibly depending on $C$) that converges to $0$ as $\gamma\to 2$ and that comes from the fact that $C^2(2-\gamma)^{-2}$ may not be an integer. Substituting now $S_n(z):=-2n+4l(z,n)$, we further rewrite this as
	\begin{equation}\label{eqn::expansion_mainlem1}
	(1+o(1)) \times D_n(\D) \times \frac{e^{-\frac{C^2}{4}}}{C} \times \frac{\sqrt{n}}{D_n(\D)} \int_{\D} \e^{2n-2l(z,n)}\e^{\frac{C}{2}\frac{S_n(z)}{\sqrt{n}}(1+o(1))} \, dz.
	\end{equation}
	This already looks very much like \eqref{eqn::extended_SH}; however there are some error terms, and moreover, the function $x\mapsto \e^{Cx/2}$ is not bounded. To get around this, we truncate the exponential and control the error. 	
	For fixed $p > 0$, approximating the indicator functions $\I{x\le p}$ by continuous functions \footnote{by say $F_m(x)=\e^{\frac{Cx}{2}}(\I{x\leq p-2^{-m}}+(1-2^m(x-p+2^{-m}))\I{p-2^{-m}\le x \le p})$}, it follows from  \eqref{eqn::extended_SH} that 
	\[\frac{\sqrt{n}}{D_n(\D)} \int_{\D} \e^{2n-2l(z,n)} \e^{\frac{C}{2}\frac{S_n(z)}{\sqrt{n}}} \I{\frac{S_n(z)}{\sqrt{n}}\leq p} \, dz\]
	converges in probability as $\gamma\to 2^{-}$ (and therefore $n\to \infty$) to \[\sqrt{\frac{4}{\pi}} \E{\e^{\frac{C}{\sqrt{2}}R_1}\I{R_1\le p}}.\]
	Since the $o(1)$ is deterministic, the same then also holds for 
	\[\frac{\sqrt{n}}{D_n(\D)} \int_{\D} \e^{2n-2l(z,n)} \e^{\frac{C}{2}(1+o(1))\frac{S_n(z)}{\sqrt{n}}} \I{\frac{S_n(z)}{\sqrt{n}}\leq p} \, dz.\]
	But now for any fixed $C > 0$, we have \[\lim_{p\to \infty} \E{e^{\frac{C}{\sqrt{2}}R_1}\I{R_1\leq p}} = \E{e^{\frac{C}{\sqrt{2}}R_1}}\]
	and one can verify by hand that $\E{e^{mR_1}}\sim \sqrt{2\pi}m e^{m^2/2}$ as $m\to \infty$ (see for example \cite[above equation (4.6)]{Madaule}). Therefore, since we know from Theorem \ref{thm::critical_cascades} that $D_n(\D)\to \mu'(\D)$ almost surely as $n\to \infty$, we can conclude that
	\[D_n(\D) \times \frac{e^{-\frac{C^2}{4}}}{C}\frac{\sqrt{n}}{D_n(\D)} \int_{\D} \e^{2n-2l(z,n)} \e^{\frac{C}{2}(1+o(1))\frac{S_n(z)}{\sqrt{n}}} \I{\frac{S_n(z)}{\sqrt{n}}\leq p} \, dz \]
	converges to $2\mu'(\D)$ in probability, as $n\to \infty$ and then $p\to \infty$. 
	
	Thus, it remains to show that for fixed $C$, 
	\begin{equation}\label{eqn::trunc_p}
	\sqrt{n} \int_{\D} \e^{2n-2l(z,n)} \e^{\frac{C}{2}\frac{S_n(z)}{\sqrt{n}}(1+o(1))} \I{\frac{S_n}{\sqrt{n}}>p} \, dz
	\end{equation}
	converges to $0$ in probability as $\gamma\to 2^{-}$ and then $p\to \infty$. Fix $\epsilon>0$, and recall the definition of the event $E_\eta(n,z)=\{S_k(z)\ge -2\eta\, , \, 0\le k \le n\}$. Let $C_\eta= \cap_{n,z} E_\eta(n,z)$. We now bound 
	\begin{equation}\label{eqn::trunc_p_withepsilon}
	\P\left( \sqrt{n} \int_{\D} \e^{2n-2l(z,n)}\e^{\frac{C}{2}\frac{S_n(z)}{\sqrt{n}}(1+o(1))} \I{\frac{S_n}{\sqrt{n}}>p} \, dz>\epsilon\right)
	\end{equation}
	by the sum of $\P\left( C_\eta ^c \right)$ and 
	\[\P\left(C_\eta \cap \left\{ \sqrt{n} \int_{\D} \e^{2n-2l(z,n)}\e^{\frac{C}{2}\frac{S_n(z)}{\sqrt{n}}(1+o(1))} \I{\frac{S_n}{\sqrt{n}}>p} \, dz >\epsilon \right\}\right).\]
	By the Markov inequality and the fact that $C_\eta \subset \cap_z E_\eta(n,z)$,  the second term is less than
	\begin{equation}
	\label{eqn::truncation_probs}
	\frac{\sqrt{n}}{\epsilon} \E{\int_{\D} \e^{2n-2l(z,n)}\e^{\frac{C}{2}\frac{S_n(z)}{\sqrt{n}}(1+o(1))} \I{\frac{S_n}{\sqrt{n}}>p}\1_{E_\eta(n,z)} \, dz }. 
	\end{equation} 
	Moreover, by definition of the law $\phs$, we see that the expectation in \eqref{eqn::truncation_probs} is equal to a deterministic constant times
	\[ 
	\hat{ \mathbb E}^*\left( e^{\frac{C}{2}\frac{S_n(Z)}{\sqrt{n}}(1+o(1))} \I{\frac{S_n(Z)}{\sqrt{n}}>p} \1_{E_\eta(n,Z)} \right),\]
	which by Lemma \ref{lem::control_exp_big} is less than or equal to 
	\[
	\hat{ \mathbb E}^*\left(c\frac{1+(2\eta+S_0(Z))^+}{\sqrt{n}} \e^{-\frac{p}{4}}\e^{\frac{CS_0(Z)}{\sqrt{n}}}\right)\]
	for $n$ large enough and for some $c(C)>0$. Using that $Z$ is chosen proportionally to $\CR(Z,\D)^2$ under $\hat{ \mathbb P}^*$ we can deduce that for every $\eta>1$ and $n$  large enough, \eqref{eqn::truncation_probs} is less than a deterministic constant times
	$\eta\e^{-p/4}/\epsilon$
	for every fixed $p$. Thus, we can bound \eqref{eqn::trunc_p_withepsilon} by a deterministic constant times
	\[\P\left( C_\eta ^c \right)+\frac{\eta}{\epsilon}\e^{-p/4}.\]
	By \cite[proof of Proposition 6.4]{APS} $\mathbb{P}(C_\eta)\to 1$ as $\eta \to \infty$ and thus by choosing first $\eta$ large, we can make the first term as small as we wish. Then, uniformly in large $n$ by choosing $p$ large, we can also make the second term arbitrarily small. From here the claim follows.
\end{proofof}
\\

\begin{proofof}{Lemma \ref{lem::main2}} Again we write $n=n(\gamma,C)$, and assume that $\Os=\D$. We now use the decomposition \eqref{decomponsition},
	and further separate each component $D'\in \mathcal{A}_n$ into two parts: the points $z$ around which the area of the disk $B(z,d(z,D'))$ is comparable to $\text{Area}(D')$, and the points where $\text{Area}(D')$ is much bigger. More precisely, define 
	\[A_{z,\gamma}= \{\text{Area}(D_n(z))\leq 2^{1/(p-1)} \CR(z,D_n(z))^2\},\]
	where by $D_n(z)$ we denote the component $D' \in \mathcal{A}_n$ containing $z$ and take $p=1+(2-\gamma)/2$ as in Lemma \ref{lem::pmoment}. The reason for choosing this comparison will be clear from the proof.
	
	We now bound $| (M_n^\gamma(\D)-\mu^\gamma(\D))/(2-\gamma) |$ by the sum of
	\begin{equation}\label{eqn::decomp_proofmain2_line1}
	\frac{\sum_{D'\in \mathcal{A}_n} \int_{D'}  \1_{A_{z,\gamma}^c} \left (\e^{\gamma n - \frac{\gamma^2}{2}l(z,n)}dz + \e^{\gamma n}\tilde{\mu}_{D'}^\gamma(dz)\right ) }{2-\gamma}
	\end{equation} 
	and
	\begin{equation}\label{eqn::decomp_proofmain2_line2}\frac{\sum_{D'\in \mathcal{A}_n} \big|\int_{D'}  \1_{A_{z,\gamma}} (\e^{\gamma n - \frac{\gamma^2}{2}l(z,n)}dz - \e^{\gamma n}\tilde{\mu}_{D'}^\gamma(dz))\big| }{2-\gamma}. 
	\end{equation}
	
	We begin by showing that \eqref{eqn::decomp_proofmain2_line1} converges to 0 in $\mathcal{L}^1$ as $\gamma\to 2^{-}$, for any fixed $C$. Indeed, by first conditioning on $A_n$, we see that the expectation of this term is less than or equal to 
	\[ \frac{2}{2-\gamma}\E{\int_{\D} \e^{\gamma n-\frac{\gamma^2}{2} l(z,n)} \1_{A_{z,\gamma}^c} \, dz} = \frac{2}{2-\gamma} \phs_\gamma(A_{Z,\gamma}^c) 
	\]
	which by Lemma \ref{lem::ZbigCR} is bounded above by \[\frac{2}{2-\gamma}\, \big(ce^{-c'\lfloor (\frac{C}{2-\gamma})^2 \rfloor}+ 2^{-\frac{c'}{2(2-\gamma)}}\big)\] for some $c,c'>0$. This nicely converges to $0$ as $\gamma\to 2^{-}$. 
	
	Now, we deal with \eqref{eqn::decomp_proofmain2_line2}. The idea is to use the scaling of $p$-th moments, with $p=1+(2-\gamma)/2 > 1$ as before. To do this denote the whole expression \eqref{eqn::decomp_proofmain2_line2} by $Y_{\gamma,C}$. Fix $\eps>0$. Then for any $\delta>0$ we can write 
	\[ \P(Y_{\gamma,C}>\eps) \leq \P(\, \E{Y_{\gamma,C}^p\mid A_n} > \eps^{p}\delta) + \P(\{ Y_{\gamma,C}^p > \eps^p\} \cap \{\E{Y_{\gamma,C}^p|A_n} \le \eps^{p}\delta\}) \] 
	where by the Markov inequality, the second term is less than $\delta$. Thus, since we can take $\delta$ arbitrarily small, it is sufficient to prove that $\limsup_{C\to \infty} \limsup_{\gamma\to 2^{-}}\P\left(|\E{Y_{\gamma,C}^p\mid A_n}|\ge \epsilon\right)=0$ for any $\epsilon>0$. 
	
	A nice idea from \cite{Madaule} is to now apply the following classical inequality from \cite{EVB}, saying that for any sequence $(X_i)_{i\in \N}$ of independent centered random variables and any $q\in [1,2]$:
	\[ \E{|\sum X_i |^q}\le 2^q \sum \E{ |X_i|^q}.\]
	Applying this to the conditional probability $\E{\cdot \mid A_n}$ and with $q=p$, we see that $\E{Y_{\gamma,c}^p\mid A_n}$ is less than or equal to 
	\[ \left(\frac{2}{2-\gamma}\right)^p \sum_{D\in \D\setminus A_n} \e^{\gamma n p} \left( \left (\int_D \CR(z,D)^{\gamma^2/2}\1_{A_{z,\gamma}} \, dz\right )^p + \E{\left (\int_{D}  \1_{A_{z,\gamma}} \, \tilde{\mu}_D^\gamma(dz)\right )^p| \mathcal{F}_{A_n}} \right). \]
	By using the deterministic inequality $\CR(z,D)^2 \leq 10\text{Area}(D)$ we have
	\[\left(\int_D \CR(z,D)^{\gamma^2/2}\1_{A_{z,\gamma}} \, dz\right )^p \leq 10 \text{Area}(D)^{(1+\gamma^2/4)(p-1)}\int_D \CR(z,D)^{\gamma^2/2}\1_{A_{z,\gamma}} \, dz,\]
	and by Lemma \ref{lem::pmoment} we can bound $\E{\left (\int_{D}  \1_{A_{z,\gamma}} \, \tilde{\mu}_D^\gamma(dz)\right )^p| \mathcal{F}_{A_n}}$ by 
	\[K \text{Area}(D)^{(p-1)(1+\frac{\gamma^2}{4})}\int_D \e^{\gamma n p}\CR(z,D)^{\frac{\gamma^2}{2}} \1_{A_{z,\gamma}}\, dz.\]
	Thus the whole expression by some universal constant times 
	\[ \left(\frac{2}{2-\gamma}\right)^p \sum_{D\in \D\setminus A_n}  \text{Area}(D)^{(p-1)(1+\gamma^2/4)}\int_D \e^{\gamma n p}\CR(z,D)^{\frac{\gamma^2}{2}} \1_{A_{z,\gamma}}\, dz, \]
	which, since $\1_{A_{z,\gamma}} \frac{\text{Area}(D)^{p-1}}{\CR(z,D)^{2p-2}}\le 2$, is in turn less than four times
	\[ \left(\frac{2}{2-\gamma}\right)^p \int_{\D} \e^{\gamma np - (\frac{\gamma^2p}{2} + 2p-2)l(z,n)} \, dz.\] 
	Now we choose $\tilde{\gamma}(\gamma)$ such that $\frac{\tilde{\gamma}^2}{2}=\frac{\gamma^2p}{2} + 2p -2$. A direct calculation yields that 
	\begin{itemize}
		\item $\gamma np-\tilde{\gamma}n=- C^2(1+o(1))/ 4$; 
		\item $(2-\tilde{\gamma})/ (2-\gamma)=:e(\gamma)\to 1$ as $\gamma\to 2^{-}$; and 
		\item $(2-\gamma)^{p-1}\to 1$ as $\gamma\to 2^{-}$.
	\end{itemize}
	Thus, 
	\begin{align*}
	&\lim_{C\to \infty}\limsup_{\gamma\to 2^{-}} \left(\frac{2}{2-\gamma}\right)^p \int_{\D} \e^{\gamma np - (\frac{\gamma^2}{2}p + 2p-2)l(z,n)} \, dz \\
	&\hspace{0.1\textwidth}= \lim_{C\to \infty} \limsup_{\gamma\to 2^{-}} \frac{2e(\gamma)}{2-\tilde{\gamma}} M_{\left \lfloor \frac{C^2 e(\gamma)^2}{(2-\tilde{\gamma})^2}\right \rfloor}^{\tilde{\gamma}}(\D) \e^{-\frac{1}{4}C^2}
	\end{align*}
	where the limits are in probability. Thanks to Lemma \ref{lem::main1}, this is bounded by 
	\[4\mu'(D)\lim_{C\to \infty} e^{-1/4C^2}/C=0.\] 
\end{proofof}

\subsection{Extensions}\label{sec::extensions}

In this section, we will shortly discuss how our results can be extended to the boundary Liouville measure and to the case of the Liouville measure for the Neumann GFF. Our results can be also easily extended to the case of quantum surfaces like quantum wedges, quantum disks or quantum spheres introduced in \cite{ShZ, DMS}, but this will be discussed elsewhere for the brevity of this note \cite{AP}. Given our aim of leaving this a short note, we will not define any of the terms in detail, but rather refer to \cite{Ber2} for the Neumann GFF and to \cite{DS}, for the boundary Liouville measure.

\subsubsection{Liouville measure for the Neumann GFF}\label{sec::NGFF}

We refer the reader to \cite{Ber2} for a definition and disussion on the Neumann GFF. The adaption to (any version of the) Neumann GFF follows by writing the Neumann GFF as a sum of an Dirichlet GFF and the harmonic extension $h$ of an independent log-correlated Gaussian field on the boundary. Notice that this harmonic extension is defined pointwise in the interior of the domain. Whereas the harmonic extension blows up when reaching the boundary, one can check that $m_\gamma(dz) := e^{\gamma h(z)} dz$ still defines an a.s. finite measure on the domain for all $\gamma \in [0,2]$ \cite{HRVdisk}. In particular, as $m_\gamma(dz) \to m_2(dz)$ as $\gamma \to 2$, the case of the Neumann GFF follows from the case of the Dirichlet GFF, when the chaos measures are defined with a different choice of base measure.

\subsubsection{Boundary Liouville measure}

We refer to \cite{DS}, for discussion of the boundary Liouville measure. The most important application is the extension of the Fyodorov-Bouchaud formula \cite{Gui} to the critical case. So for clarity, let us see how our results can be extended to this particular case, where the underlying Gaussian field is defined on the unit circle, with covariance $-2\log ||x-y||_2$. In fact, it is easier to generalize our argument first to the case of boundary measures associated with a  Neumann-Dirichlet GFF (on the ``Neumann'' part of the boundary), and then to conclude the result for the circular boundary measure above, by absolute continuity. 

So let us discuss the case of the Neumann-Dirichlet GFF. It was already explained in \cite{APS}, Section 5, how to extend our construction of the Liouville measure using FPS to this boundary measure. However, this was only done in the subcritical regime.

The first step in adapting the proof therefore, is to provide a construction of the critical boundary measure using the boundary equivalent of the first passage sets. These boundary-FPS are discussed in Section 5 of \cite{APS} and their behaviour is completely analoguous to the normal FPS. In fact, via the boundary-FPS, the proofs in Section 6 of \cite{APS} will work essentially word-for-word to prove that one can construct a critical boundary measure using the derivative martingale and using a Seneta-Heyde scaling, and that these constructions agree (up to explicit constants) with the critical boundary measure as constructed using semi-circle averages of the field \footnote{in the Seneta--Heyde scaling, this is \cite[Theorem 4.1]{HRVdisk}}. One only needs to replace the relevant definitions for sets, conformal radius etc, exactly as done in Section 5 of \cite{APS} for the subcritical case. For clarity, we also list here the external inputs to Section 6, and how they extend to the critical case:
\begin{itemize}
	\item \emph{Lemmas 2.3 and 3.5 from the article \cite{Ai}.} One can check that these also hold for the boundary-loops; the arguments are based on the iterative nature of conformal loop ensembles and their conformal invariance, both of which hold for the boundary loop ensembles.
	\item \emph{Theorem 1.1 from the article \cite{EP}}, which says that the critical Liouville measure for a Dirichlet GFF in the bulk (\cite{DKRV,JS}) can equivalently be constructed using the ``derivative martingale'' defined via circle averages of the field. The proof in \cite{EP} directly adapts to the setting of boundary measures for the Neumann--Dirichlet GFF. Indeed, the argument is based around certain changes of measure for the Brownian motions arising from circle averages of the Dirichlet GFF in the bulk, and when one instead considers semi-circle averages of the Neumann--Dirichlet GFF on the boundary, these remain Brownian motions. The only change is that they have speed $2$. Thus, one obtains that the boundary derivative martingale defined using semi-circle averages of the Neumann--Dirichlet GFF gives an equivalent construction of the critical boundary measure defined in \cite[Theorem 4.1]{HRVdisk}. 
	\item \emph{Proposition 3.6 from \cite{EP}}. This states that certain cut-off versions of the (bulk) derivative martingale are uniformly integrable. For the same reason as above the proof extends directly to give the equivalent result for the boundary derivative martingale. 
\end{itemize}

The second, and final, step is to adapt the proof of the current article to the Dirichlet--Neumann case. Again, this goes through word-for-word when one replaces the relevant definitions appropriately.

\subsection* {Acknowledgements} Juhan Aru is supported by the SNF grant 175505, Ellen Powell is supported in part by the NCCR Swissmap and Avelio Sepúlveda is supported by the ERC grant LiKo 676999. All the authors are also thankful to the SNF grant 155922 and the NCCR Swissmap initiative.

\bibliographystyle{alpha}
\bibliography{bibliography_gmc_cascades}

\end{document}